\documentclass[11pt]{article}

\usepackage[table, dvipsnames]{xcolor}
\usepackage[pagebackref,backref=true,colorlinks=true, linkcolor=Sepia, citecolor=Sepia]{hyperref}
\usepackage{amsmath,amsthm,amssymb,fancyhdr,graphicx,bbm,cancel,mathrsfs,todonotes,xypic,pinlabel}
\usepackage{tikz}
\usetikzlibrary{decorations.markings}
\usepackage{overpic}
\usepackage{float}
\usepackage{extarrows,tipa}
\usetikzlibrary{matrix}
\usepackage[all]{xy}
\usepackage{mathtools}
\usepackage{bm}
\usepackage{verbatim}
\usepackage{microtype}
\usepackage{subcaption}

\theoremstyle{theorem}
\newtheorem{thm}{Theorem}[section]

\newtheorem{lem}[thm]{Lemma}
\newtheorem{prop}[thm]{Proposition}

\theoremstyle{definition} 

\newtheorem{rmk}[thm]{Remark}
\theoremstyle{remark} 

\usepackage{enumerate,pdfpages,stmaryrd}
\usepackage[margin=1.4in, marginparwidth=1in]{geometry}
\usepackage{tikz}
\usepackage{dsfont}
\usetikzlibrary{matrix}

\newcommand{\C}{\mathbb{C}}

\newcommand{\Z}{\mathbb{Z}}

\newcommand{\R}{\mathbb{R}}

\newcommand{\beq}{\begin{equation*}}
\newcommand{\eeq}{\end{equation*}}

\newcommand{\SL}{\mathrm{SL}}

\newcommand{\onto}{\twoheadrightarrow}

\DeclareMathOperator{\Diff}{Diff}

\DeclareMathOperator{\PSL}{PSL}

\DeclareMathOperator{\Aut}{Aut}

\DeclareMathOperator{\Homeo}{Homeo}

\DeclareMathOperator{\SO}{SO}

\DeclareMathOperator{\Out}{Out}

\newcommand{\dmo}{\DeclareMathOperator}

\newcommand{\Om}{\Omega}

\newcommand{\what}{\widehat}\newcommand{\wtil}{\widetilde}

\newcommand{\bb}[1]{\mathbb{#1}}

\dmo{\sgn}{sign}\dmo{\Span}{span}
\dmo{\we}{\wedge}
\dmo{\ind}{ind}\dmo{\Ind}{Ind}
\dmo{\bop}{\bigoplus}\dmo{\pic}{Pic}
\dmo{\vol}{Vol}\dmo{\gal}{Gal}\dmo{\perm}{Perm}
\dmo{\tor}{Tor}\dmo{\ext}{Ext}\dmo{\Ext}{Ext}
\dmo{\aut}{aut}

\dmo{\inn}{Inn}\dmo{\var}{Var}
\dmo{\ad}{ad}\dmo{\curl}{curl}
\dmo{\hy}{\bb H}\dmo{\Sl}{SL}
\dmo{\psl}{PSL}
\dmo{\iso}{iso}
\dmo{\conf}{Conf}
\dmo{\stab}{Stab}\dmo{\Jac}{Jac }
\dmo{\diam}{diam}\dmo{\fix}{Fixed}\dmo{\Fix}{Fix}
\dmo{\injR}{injRad}\dmo{\Ad}{Ad}
\dmo{\esv}{ess-vol}
\dmo{\nil}{Nil}\dmo{\sol}{Sol}
\dmo{\Div}{div}
\dmo{\SU}{SU}
\dmo{\rk}{rk}
\dmo{\rank}{rank}
\dmo{\psp}{PSp}\dmo{\psu}{PSU}
\dmo{\PU}{PU}\dmo{\pgl}{PGL}
\dmo{\Mod}{Mod}\dmo{\range}{Range}
\dmo{\eu}{eu}\dmo{\mi}{mi}
\dmo{\Log}{Log}\dmo{\supp}{supp}
\dmo{\maps}{Maps}\dmo{\Gr}{Gr}
\dmo{\Pin}{Pin}
\dmo{\Spin}{Spin}\dmo{\Str}{Str}
\dmo{\Sq}{Sq}\dmo{\Symp}{Symp}
\dmo{\pd}{PD}\dmo{\PD}{PD}\dmo{\sig}{Sig}
\dmo{\St}{St}
\dmo{\Pt}{Pt}\dmo{\pt}{pt}

\dmo{\Pl}{PL}
\dmo{\String}{String}\dmo{\smear}{smear}

\dmo{\dev}{dev}
\dmo{\met}{Met}\dmo{\contact}{Contact}
\dmo{\teich}{Teich}\dmo{\Teich}{Teich}\dmo{\qi}{QI}
\dmo{\der}{Der}
\dmo{\cl}{Cliff}\dmo{\Cl}{Cl}
\dmo{\Pf}{Pf}
\dmo{\ch}{ch}\dmo{\diag}{diag}
\dmo{\grad}{grad}\dmo{\Char}{char}
\dmo{\spec}{Spec}\dmo{\Arg}{Arg}
\dmo{\gl}{GL}
\dmo{\sym}{Sym}\dmo{\Sym}{Sym}
\dmo{\com}{Comm}
\dmo{\Lk}{Lk}
\dmo{\CAT}{CAT}
\dmo{\Rep}{Rep}
\dmo{\Res}{Res}
\dmo{\Conf}{Conf}
\dmo{\PConf}{PConf}
\dmo{\Push}{Push}
\dmo{\Cont}{Cont}
\dmo{\sm}{\setminus}
\dmo{\vn}{\varnothing}
\dmo{\disk}{\mathbb D}
\dmo{\Trd}{Trd}\dmo{\Mat}{Mat}
\dmo{\Riem}{Riem}
\dmo{\Diffn}{\Diff_0}\dmo{\diff}{diff}
\dmo{\homeo}{Homeo}

\dmo{\Ham}{Ham}\dmo{\Met}{Met}
\dmo{\Ein}{Ein}\dmo{\CP}{\co P}
\dmo{\Per}{Per}\dmo{\Ric}{Ric}
\dmo{\Nrd}{Nrd}
\dmo{\Comp}{Comp}\dmo{\PSC}{PSC}
\dmo{\Cent}{Cent}\dmo{\Orb}{Orb}
\dmo{\aind}{a-ind}\dmo{\tind}{t-ind}
\dmo{\constant}{constant}
\dmo{\Td}{Td}
\dmo{\LMod}{LMod}
\dmo{\SMod}{SMod}
\dmo{\SDiff}{SDiff}
\dmo{\Br}{Br}
\dmo{\csch}{csch}
\dmo{\triv}{triv}
\dmo{\genus}{genus}
\dmo{\Homeq}{HomEq}
\dmo{\PP}{\mathbb{P}}
\dmo{\U}{U}
\dmo{\Gal}{Gal}
\dmo{\BDiff}{\wtil{\Diff}}
\dmo{\BAut}{\wtil{\Aut}}
\dmo{\Iso}{Iso}
\dmo{\Cone}{Cone}
\dmo{\codim}{codim}
\dmo{\II}{II}
\dmo{\I}{I}
\dmo{\InjRad}{InjRad}
\dmo{\Inn}{Inn}
\dmo{\sys}{sys}
\dmo{\Comm}{Comm}
\dmo{\PO}{PO}
\dmo{\vertex}{Vert}
\dmo{\POm}{P\Om}
\dmo{\ab}{ab}
\dmo{\PSO}{PSO}
\dmo{\CRS}{CRS}
\dmo{\Diffext}{Diffext}
\dmo{\Diffextad}{Diffextad}
\dmo{\Diffstand}{Diffstand}

\usepackage[titles]{tocloft}

\begin{document}

\title{Sphere decomposition for finite subgroups of 3-manifold mapping class groups}
\author{Corey Bregman and Bena Tshishiku}

\maketitle


\begin{abstract}
For a closed, oriented 3-manifold $M$, we show that any finite subgroup of its mapping class group preserves an isotopy class of filling sphere system on $M$. This is an isotopy version of the ``equivariant sphere theorem" of Meeks--Yau. As part of our analysis, we establish a bijection between simplices of the Guirardel--Levitt outer space for $\pi_1(M)$ and filling sphere systems in $M$, generalizing an argument of Hatcher. We also compute the image of $\Homeo(M)\to\Out(\pi_1M)$ for arbitrary $M$. 
\end{abstract}

\setcounter{secnumdepth}{2}
\setcounter{tocdepth}{1}
\begingroup
\linespread{0.8}\selectfont 
\tableofcontents
\addtocontents{toc}{\protect\thispagestyle{empty}}
\endgroup

\section{Introduction}

Let $M$ be a closed, oriented 3-manifold. In this paper we prove a structure theorem for finite subgroups of the mapping class group $\Mod(M)$ when $M$ is reducible. The following theorem of Meeks--Yau \cite{meeks-yau} (see also \cite{dunwoody,plotnick-sphere-theroem}) motivates our main result Theorem \ref{thm:mod-sphere-decomposition}. 

\begin{thm}[Meeks-Yau, sphere decomposition theorem]\label{thm:meeks-yau}
Let $M$ be a closed oriented reducible $3$-manifold. Given a finite subgroup $G<\Diff(M)$, there exists an essential, filling sphere system $S\subset M$ that is preserved by $G$. 
\end{thm}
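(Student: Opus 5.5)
The plan is the classical minimal-surface argument of Meeks--Yau, phrased through an equivariant sphere theorem.

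\emph{Step 1: an invariant metric.} Average an arbitrary Riemannian metric over the finite group $G$ to get a $G$-invariant metric $g$ on $M$. Every isometry of $g$ permutes least-area surfaces, so the problem becomes one about area-minimizing spheres.

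\emph{Step 2: the equivariant sphere theorem.} Since $M$ is reducible, the class of essential spheres is nonempty. Apply Sacks--Uhlenbeck and Meeks--Yau to minimize area among essential maps $S^2\to M$, i.e.\ those not null-homotopic. The output is a least-area essential sphere $\Sigma$. It is either embedded or double covers an embedded $\R P^2$; the latter cannot occur for oriented $M$ with the relevant two-sidedness, or is handled by passing to the boundary of a regular neighborhood. The key assertion, and the main obstacle, is that for every $h\in G$ the spheres $h\Sigma$ and $\Sigma$ are either equal or disjoint. The tool is the Meeks--Yau exchange/roundoff argument. If two least-area spheres meet transversally, cut and paste along the intersection curves to produce spheres of no larger area, at least one of which is still essential. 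Rounding off the corners then strictly decreases area, contradicting minimality. Non-transverse tangential intersections are ruled out by the maximum principle. Making the exchange keep essentiality, and handling the case where the two spheres are homotopic, requires care. One works in the universal cover, where the lifts are least-area spheres and essentiality corresponds to separating ends. Consequently $G\Sigma$ is a finite disjoint $G$-invariant family of embedded essential spheres.

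\emph{Step 3: iterate to a filling system.} Cut $M$ along $G\Sigma$, cap the boundary spheres with balls, and record the induced $G$-action on the resulting pieces. Any component that is still reducible, or a nonprime $S^1\times S^2$-type piece whose $\pi_1$ is not yet split, carries an induced invariant metric on the complement. Repeat the minimization there, among spheres essential in $M$ and disjoint from and non-parallel to the spheres already chosen. The minimization is justified because the existing spheres act as barriers: they are minimal, so the maximum principle applies. By Kneser--Haken finiteness, the number of pairwise disjoint non-parallel essential spheres is bounded, so the process terminates. The result is a $G$-invariant essential sphere system $S$ whose complementary pieces, after capping, are irreducible or punctured balls. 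This is exactly the filling condition: the pieces correspond to the vertex groups of a free splitting of $\pi_1M$ with irreducible factors.

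Finally, check that $S$ contains no parallel pairs and no inessential components. Deleting redundant parallel copies can be done $G$-equivariantly, since $G$ permutes parallelism classes, so invariance is preserved.
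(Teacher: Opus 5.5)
The paper gives no proof of this statement; it is quoted from Meeks--Yau (with Dunwoody and Plotnick as alternatives), so your proposal has to stand on its own as a reconstruction of their argument. Steps 1 and 2 are a reasonable outline of the Meeks--Yau equivariant sphere theorem, with the exchange argument taken as a black box.

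The gap is in Step 3. You minimize over spheres that are essential in $M$, disjoint from the spheres already chosen, and non-parallel to them. This is the wrong variational problem. The spheres already chosen realize the least area among essential spheres. So a minimizer in the cut-open piece $P$, with its minimal boundary as barrier, is just a boundary sphere unless boundary-parallel spheres are excluded. Once they are excluded, the class is no longer closed under the limits that minimization produces (Sacks--Uhlenbeck bubbling). It is also not closed under the cut-and-paste of the exchange argument. So neither existence of a minimizer nor disjointness of its $G$-translates follows.

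Worse, the class contains spheres that are useless for filling. Take $M=M_1\#M_2\#M_3\#M_4$ with every $\pi_1(M_i)\neq 1$. Suppose at some stage the chosen spheres are $\Sigma_1,\Sigma_2$, cutting off $M_1,M_2$ and of areas $A_1,A_2$, so the middle piece $P$ caps off to $\hat P=M_3\#M_4$. Tube pushed-in copies of $\Sigma_1,\Sigma_2$ along an arc in $P$ to get a sphere $\tau$. Then $\tau$ is essential in $M$, disjoint from and non-parallel to $\Sigma_1,\Sigma_2$, and has area close to $A_1+A_2$. Yet $\tau$ bounds a ball in $\hat P$. If the metric makes every sphere separating $M_3$ from $M_4$ much larger than $A_1+A_2$, your minimization either degenerates onto $\Sigma_1\cup\Sigma_2$ or returns a $\tau$-type sphere. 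In neither case does the process move toward a filling system, and nothing in your argument forces the new sphere to be essential in $\hat P$.

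What is needed is a class closed under sums, as in the Sacks--Uhlenbeck generating set for $\pi_2$ that Meeks--Yau use. At stage $k$, let $\Lambda_k\subset\pi_2(M)$ be the $\Z[\pi_1M]$-submodule generated by the spheres chosen so far. Minimize area over maps $S^2\to M$ whose class lies outside $\Lambda_k$. If a bubble tree has total class outside $\Lambda_k$, then some bubble does, so a minimizer exists. The same additivity is what makes the exchange argument go through, so the minimizer is embedded (or double covers a one-sided $\R P^2$), its $G$-translates are pairwise disjoint or equal, and they are disjoint from the earlier spheres. Disjointness is thus an output rather than a constraint, and no barrier argument is needed. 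A new sphere is never homotopic to an earlier one, so Kneser--Haken finiteness forces $\Lambda_k=\pi_2(M)$ after finitely many stages. A Mayer--Vietoris argument over the dual tree in $\wtil M$ then shows every capped piece has $\pi_2=0$. By the sphere theorem and the Poincar\'e conjecture, each capped piece is irreducible.

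Two smaller points. The $\R P^2$ alternative in Step 2 does occur (e.g.\ in $\R P^3\#\R P^3$), so the regular-neighborhood option is the one you need. Your last paragraph is unnecessary for this statement and wrong as written: if some $g\in G$ exchanges two parallel spheres, deleting one of them destroys invariance.
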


Here a sphere system $S\subset M$ is an embedded submanifold homeomorphic to a disjoint union of 2-spheres; we say that  $S$ is \emph{essential} if no component of $S$ bounds a ball in $M$; a sphere system $S$ is \emph{filling} if the manifold obtained from $M$ by deleting a regular neighborhood of $S$ and attaching a 3-ball to each boundary component is irreducible.

Our main result is an analogue of Theorem \ref{thm:meeks-yau}, where $\Diff(M)$ is replaced by the mapping class group $\Mod(M)$, the group of diffeomorphisms of $M$ modulo isotopy. 

\begin{thm}\label{thm:mod-sphere-decomposition}
Let $M$ be a closed, oriented reducible $3$-manifold. Given a finite subgroup $G<\Mod(M)$, there exists an isotopy class of essential, filling sphere system $S\subset M$ that is preserved by $G$. 
\end{thm}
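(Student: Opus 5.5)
We pass from $\Mod(M)$ to $\Out(\pi_1 M)$, use the action of finite groups on the Guirardel--Levitt outer space, and return to $M$ via the bijection between simplices and filling sphere systems announced in the abstract.

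\emph{Step 1: reduce to $\Out(\pi_1M)$.} Write $M=M_1\#\cdots\#M_n\#(S^1\times S^2)^{\#k}$ with each $M_i$ irreducible, and $\pi_1M=G_1*\cdots*G_n*F_k$. Consider the homomorphism $\Phi:\Mod(M)\to\Out(\pi_1M)$. Its kernel is generated by sphere twists, together with twists along tori and other isotopically nontrivial but $\pi_1$-trivial classes in the prime factors. Sphere twists act trivially on isotopy classes of spheres, since a twist along $S$ fixes every sphere disjoint from $S$, and by a surgery argument every sphere up to isotopy. I therefore expect the relevant action of $\Mod(M)$ on isotopy classes of filling sphere systems to factor through (a quotient close to) $\Phi$. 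Kernel elements that are not sphere twists, such as mapping classes supported in a single $M_i$, preserve the decomposition up to isotopy. This is because the standard system can be chosen disjoint from their supports.

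\emph{Step 2: fixed point in outer space.} Let $\mathcal O$ be the Guirardel--Levitt outer space for the free product decomposition $\pi_1M=G_1*\cdots*G_n*F_k$. It parametrizes $\pi_1M$-trees with trivial edge stabilizers whose nontrivial vertex stabilizers are conjugates of the $G_i$. The image $\bar G=\Phi(G)$ is finite and acts on $\mathcal O$, because $\Phi(\Mod(M))$ permutes the conjugacy classes of the freely indecomposable factors. $\mathcal O$ is contractible, and finite groups have fixed points there. This follows from the realization theorem for finite subgroups of $\Out$ of a free product: by Culler, Zimmermann, and Hensel--Osajda--Przytycki for free products, such a group lifts to an action on a graph of groups. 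Hence $\bar G$ fixes a point of $\mathcal O$, and so it preserves the open simplex containing that point, that is, a tree $T$ up to equivariant homeomorphism.

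\emph{Step 3: return to spheres.} By the Hatcher-type bijection, the simplex of $T$ corresponds to an isotopy class of essential filling sphere system $S_T$. The correspondence is $\Mod(M)$-equivariant: one takes the dual tree of the lift $\tilde S\subset\tilde M$. Since $\bar G$ fixes the simplex, every $g\in G$ satisfies $g(S_T)\simeq S_T$. Essentiality and filling come from the bijection itself: edges correspond to essential spheres, and vertex stabilizers equal to the $G_i$ or trivial correspond to complementary pieces that become irreducible after capping.

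\emph{Main obstacle.} The hardest step is the equivariance in Step 3 together with the kernel issue in Step 1. We must show that two filling sphere systems whose dual trees lie in the same simplex are isotopic, not merely homotopic. This is Laudenbach's homotopy-implies-isotopy for spheres, and it needs care when irreducible factors have non-free fundamental group or when there are $S^1\times S^2$ summands. I would first follow Hatcher's normal-form argument, putting a sphere system in normal form with respect to $S_T$. If that fails in the presence of aspherical or spherical $M_i$, I would instead work directly with the characterization of isotopy classes of spheres by their induced splittings of $\pi_1M$.
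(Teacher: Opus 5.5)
Your top-level argument is the paper's. You map $G$ to $\Out(\pi_1M)$, take a fixed Grushko tree $T$ from Theorem \ref{thm:hensel-kielak}, and pull it back through the $\Mod(M)$-equivariant bijection $\Phi:\mathcal S(M)\to\mathcal X(\pi_1M)$ of Theorem \ref{thm:sphere-tree}. If you grant that bijection, the proof is complete and your Step 1 is unnecessary. Injectivity plus equivariance give $\Phi(gS_T)=\bar g\cdot T=T=\Phi(S_T)$, hence $gS_T=S_T$ (Remark \ref{rmk:sphere-twist}), with no description of $\ker(\Mod(M)\to\Out(\pi_1M))$ needed. Step 1 would not work on its own anyway. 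Showing that a kernel element supported in a punctured $M_i$ fixes the \emph{standard} system says nothing about $S_T$.

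However, you treat the bijection as something still to be proved, and as a plan for that there is a genuine gap. Your plan addresses only injectivity and never surjectivity, i.e.\ that the specific tree $T$ given by the fixed-point theorem is dual to \emph{some} sphere system in $M$. Surjectivity is the delicate half.

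Building $M_X\cong M$ from the graph of groups $T/\pi_1M$ yields a sphere system with dual tree $(T,\rho')$, where $\rho=\rho'\circ\alpha$ for some $\alpha\in\Aut(\pi_1M)$. Hatcher's repair is to precompose with a homeomorphism inducing $\alpha$. That is unavailable here, because $\Homeo(M)\to\Out(\pi_1M)$ is not surjective in general (Section \ref{sec:image}).

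The paper instead writes $\alpha=w\beta\sigma$ via Fouxe-Rabinovitch:
\begin{itemize}
\item $w$ is induced by slide homeomorphisms;
\item a factor automorphism $\beta$ does not change the equivariant isomorphism class of the tree;
\item a transposition $\sigma$ of isomorphic factors is handled by relabeling the pieces of $M_X$ and applying a disk-dragging homeomorphism $\what M\to M$.
\end{itemize}
The last step realizes the required tree even when $\sigma$ is induced by no homeomorphism of $M$, for example non-homeomorphic lens spaces with the same $\pi_1$.

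For injectivity, Laudenbach is indeed the final step, but the substantive part is passing from isomorphic dual trees to homotopic spheres. The paper does not use normal forms. It shows that for each lift $s$ and its partner $\phi(s)$, corresponding complementary regions in $\wtil M$ have compact symmetric difference: only finitely many translates of a compact path in the tree cross a given edge. Hence $[s]=[\phi(s)]$ in $H_2(\wtil M)\cong\pi_2(\wtil M)$.
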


Note that Theorem \ref{thm:mod-sphere-decomposition} does not follow directly from Theorem \ref{thm:meeks-yau}. For such an implication, one would need to know additionally that $G<\Mod(M)$ lifts to $\Diff(M)$, i.e.\ a solution to the Nielsen realization problem. This is not generally true; moreover, our motivation for proving Theorem \ref{thm:mod-sphere-decomposition} is to use it as a tool for studying the Nielsen realization problem (not the other way around).

\subsection{A motivation (Nielsen realization)}

A form of the Nielsen realization problem asks when a finite subgroup $G<\Out(\pi_1M)$ lifts to $\Diff(M)$ under the natural map $\Diff(M)\to\Out(\pi_1M)$. In other words, when can a group of symmetries of $\pi_1(M)$ be promoted to a group of symmetries of $M$. This problem has a long history and connects to geometry (e.g.\ Teichm\"uller theory), the Borel conjecture for aspherical manifolds, and flat connections on fiber bundles. 

Since the homomorphism $\Diff(M)\to\Out(\pi_1M)$ factors through the mapping class group $\Mod(M)$, it is reasonable to consider the intermediate problems of lifting $G<\Out(\pi_1M)$ to $\Mod(M)$ and lifting $G<\Mod(M)$ to $\Diff(M)$. For the latter (i.e.\ $\Diff\to\Mod$ lifting), Theorems \ref{thm:meeks-yau} and \ref{thm:mod-sphere-decomposition} give an approach to this problem when $M$ is a reducible 3-manifold, and allow one to reduce to realization problems for irreducible manifolds. Furthermore, Theorem \ref{thm:mod-sphere-decomposition} can be similarly combined with a result of Hensel--Kielak to study the former lifting problem (i.e.\ $\Mod\to\Out$ lifting). We plan to elaborate on this in future work.

In the context of these lifting problems, there is another intermediate. Let $\Diff(M)^\delta$ denote $\Diff(M)$ but with the discrete topology. We can factor the induced map on classifying spaces $B\Diff(M)^{\delta}\to B\Mod(M)$ through $B\Diff(M)$. These lifting problems can be interpreted topologically in terms of bundle monodromy and flat connections. In analogy to Theorem \ref{thm:mod-sphere-decomposition}, it would be interesting to know, for every bundle $M\to E\to BG$ with structure group $\Diff(M)$ and $G$ finite, if there is an essential, filling sphere system $S\subset M$, so that the structure group reduces to the group $\Diff(M,S)$ of diffeomorphisms that preserve $S$.

\subsection{\boldmath Case study: $M=(S^1\times S^2)^{\#n}$}

We sketch a proof of Theorem \ref{thm:mod-sphere-decomposition} in the special case $M\cong (S^1\times S^2)^{\#n}$. This will allow us to explain what was previously known and also indicate some subtleties in the general case. 

Fix $n\ge1$, set $M= (S^1\times S^2)^{\#n}$, and let $G<\Mod(M)$ be a finite subgroup. We want to find an essential, filling sphere system $S\subset M$ whose isotopy class is invariant under $G$.  

Let $G'$ denote the image of $G$ under the homomorphism $\Mod(M)\to\Out(F_n)$ given by the action on $\pi_1(M)\cong F_n$ (free group). The finite group $G'<\Out(F_n)$ is realized by  automorphisms of a simplicial graph $X$ with $\pi_1(X)\cong F_n$ by Culler, Khramtsov, and Zimmermann \cite{culler,khramtsov,zimmermann} (building on work of Karrass--Pietrowski--Solitar). Minimal marked graphs with $\pi_1(X)\cong F_n$ are in bijection with simplices of Culler--Vogtmann outer space \cite{culler-vogtmann}, and by an argument of Hatcher \cite{hatcher-stability-autfn}, the marked graph $X$ determines a unique isotopy class of essential, filling sphere system $S\subset M$. By construction, $S$ is invariant under $G$ (here it helps to note that the kernel of $\Mod^+(M)\to\Out(F_n)$ is generated by sphere twists, which act trivially on every isotopy class of sphere system). This proves Theorem \ref{thm:mod-sphere-decomposition} for $M=(S^1\times S^2)^{\#n}$. 

For arbitrary $M$, with the exception of the argument of Hatcher, the ingredients of this proof sketch have already been appropriately generalized. Hensel--Kielak \cite{hensel-kielak} give a suitable generalization of the result of Karrass--Pietrowski--Solitar. The Culler--Vogtmann outer space (which is really only an intermediate in our argument that we nevertheless find useful for framing parts of the discussion) can be replaced by the more general outer space of a free product defined by Guirardel--Levitt \cite{guirardel-levitt}. The technical difficulty in generalizing Hatcher's argument relates to the fact that $\Homeo(M)\to\Out(\pi_1M)$ is surjective for $M=(S^1\times S^2)^{\# n}$ \cite{laudenbach} but not for general 3-manifolds. Ultimately, we find a different argument that gives the same conclusion, establishing a bijection between simplices of Guirardel--Levitt outer space for $\pi_1(M)$ and essential filling sphere systems on $M$. See Theorem \ref{thm:sphere-tree}. 

\subsection{\boldmath Image of $\Homeo(M)\to\Out(\pi_1M)$}

As indicated in the preceding section, an important ingredient in our proof of Theorem \ref{thm:mod-sphere-decomposition} is to understand the failure of $\Homeo(M)\to\Out(\pi_1M)$ to be surjective. There are three basic examples where this occurs. 
\begin{enumerate}[(i)]
\item $M=N\#M'$ where $N$ is irreducible and $\Homeo(N)\to\Out(\pi_1N)$ is not surjective. 
\item $M=N_1\#N_2\#M'$, where $\pi_1(N_1)$ and $\pi_1(N_2)$ are isomorphic but $N_1$ and $N_2$ are not homeomorphic. 
\item $M=N_1\#N_2\#M'$, where $N_1,N_2$ are irreducible and there exists $\phi_i\in\Out(\pi_1N_i)$ realized by a homeomorphism $f_i$ of $N_i$, but $f_1,f_2$ cannot be chosen with the same orientation sign. Similar examples exist where the factors are permuted, but not with consistent orientation. 
\end{enumerate} 


We remark that (i) happens only when $N$ is spherical and (ii) happens only when $N_1,N_2$ are lens spaces, but (iii) is quite common. For example, it will occur whenever $M=P\#Q$ where $P,Q$ both have orientation-reversing homeomorphisms.

We show (i)--(iii) are essentially the only phenomena responsible for $\Homeo(M)\to\Out(\pi_1M)$ not being surjective and give a  description of the image. See Proposition \ref{prop:image-mod-out}. In particular, we find that the image of $\Homeo(M)\to\Out(\pi_1M)$ depends only on knowing which irreducible factors are homeomorphic (remembering orientation) and the image of $\Homeo(N)\to\Out(\pi_1N)$ for each irreducible factor. We give a complete account of the image of $\Homeo(N)\to\Out(\pi_1N)$ for irreducible $N$, which is summarized in the following theorem. 

\begin{thm}[Automorphisms induced by homeomorphisms]\label{thm:image}
Let $N$ be a closed, orientable, irreducible $3$-manifold. If $N$ is not covered by $S^3$, then $\Homeo(N)\to\Out(\pi_1N)$ is surjective. If $N$ is spherical, then $\phi\in\Out(\pi_1N)$ is induced by a homeomorphism of $N$ if and only if it has the following two properties. 
\begin{enumerate}[(i)]
\item The automorphism $\phi$ acts on $H_3(\pi_1N)$ by multiplication by $\pm1$. 
\item For any cyclic subgroup $C<\pi_1(N)$ preserved by $\phi$, the induced automorphism of $C$ preserves the Reidemeister torsion of the corresponding lens space cover $L\to N$.  
\end{enumerate} 
\end{thm}

The case when $N$ is not covered by $S^3$ follows directly from several previous works (see Section \ref{sec:image-irreducible}). Our main contribution to the statement is the spherical case. 

Condition (i) in Theorem \ref{thm:image} is equivalent to $\phi$ preserving the third $k$-invariant of $N$; see Remark \ref{rmk:k-invariants}. Roughly Theorem \ref{thm:image} says that for spherical $N$, the Reidemeister torsion and $k$-invariants give complete obstructions for $\phi\in\Out(\pi_1N)$ to be induced by a homeomorphism of $N$.

\subsection{Section outline}

Sections \ref{sec:outer-space} and \ref{sec:hatcher-argument} contain background results. In Section \ref{sec:outer-space} we recall the definition of the outer space $\mathcal{PO}(\Gamma)$ of a group $\Gamma$ with a nontrivial free product decomposition, following Guirardel--Levitt, and we state the result of Hensel--Kielak that gives fixed points for finite subgroups $G<\Out(\Gamma)$ acting on $\mathcal{PO}(\Gamma)$. In Section \ref{sec:hatcher-argument} we recall the argument of Hatcher relating simplices of the outer space for $F_n$ to sphere systems on $(S^1\times S^2)^{\#n}$; in the process we will see how the image of $\Homeo(M)\to\Out(\pi_1M)$ is relevant for generalizing Hatcher's argument. 

Sections \ref{sec:image} and \ref{sec:hatcher-generalized} contain technical ingredients for the proof of Theorem \ref{thm:mod-sphere-decomposition}. In Section  \ref{sec:image} we describe the image of $\Homeo(M)\to\Out(\pi_1M)$, and in Section \ref{sec:hatcher-generalized} we generalize Hatcher's argument to arbitrary $M$. From these ingredients, the proof of Theorem \ref{thm:mod-sphere-decomposition} follows quickly and it is also given in Section \ref{sec:hatcher-generalized}. 

Finally, in Section \ref{sec:image-irreducible}, we compute the image of $\Homeo(N)\to\Out(\pi_1N)$ for irreducible $N$, focusing primarily on spherical $N$, since other cases were previously known. 

\paragraph{Computational resource disclosure.} The authors used LLMs to check the proofs and proofread the paper. The authors take full responsibility for the content of this article. 

\section{Outer space for a free product of groups}\label{sec:outer-space}

Guirardel--Levitt defined an outer space $\mathcal{PO}(\Gamma)$ for a free product of groups $\Gamma$. We recall how the set $\mathcal X(\Gamma)$ of simplices of $\mathcal{PO}(\Gamma)$ is defined and state a result of Hensel--Kielak which gives fixed points for finite subgroups of $\Aut(\Gamma)$ acting on $\mathcal X(\Gamma)$.

Fix a finitely-generated group $\Gamma$, decomposed as a free product $\Gamma=\Gamma_1*\cdots\Gamma_m*F_n$, where each $\Gamma_i$ is indecomposable and not infinite cyclic. Such a decomposition exists and the conjugacy classes of the $\Gamma_i$'s are unique by Grushko's theorem. 

We call a simplicial $\Gamma$-tree a \emph{Grushko tree for $\Gamma$} if it has the following properties. 
\begin{enumerate}
\item[(i)] Minimal: there is no proper $\Gamma$-invariant subtree, and each degree-2 vertex $v$ is fixed by an element of $G$ that exchanges the edges incident to $v$. 
\item[(ii)] Stabilizers: for each $1\le i\le m$, there is exactly one orbit of vertices whose stabilizer is conjugate to $\Gamma_i<\Gamma$, and all other point-stabilizers are trivial.
\end{enumerate} 

\begin{rmk}
In the (degenerate) case $\Gamma=\Z$ (in which case $M=S^2\times S^1$), we should drop the condition on degree-2 vertices, so that $\mathcal X(\mathbb Z)$ is a singleton, rather than empty.
\end{rmk}

Each Grushko tree $T$ gives a Grushko decomposition of $\Gamma$. Indeed by Bass--Serre theory, the quotient $X=T/\Gamma$ has the structure of a graph of groups $(X,\mathcal G)$ whose fundamental group $\pi_1(X,\mathcal G)$ is presented as a free product $\Gamma_1*\cdots*\Gamma_m*F_n$ and is ``marked" by an isomorphism $\Gamma\cong\pi_1(X,\mathcal G)$, which is well-defined up to inner automorphisms.

Define $\mathcal X(\Gamma)$ as the set of Grushko trees for $\Gamma$, up to $\Gamma$-equivariant isomorphism. The set $\mathcal X(\Gamma)$ indexes simplices of the outer space $\mathcal {PO}(\Gamma)$ defined by Guirardel--Levitt \cite{guirardel-levitt}, whose points are \emph{metric} Grushko trees, up to rescaling all edge lengths by a positive number.

There is a natural action of $\Out(\Gamma)$ on $\mathcal X(\Gamma)$. Viewing a point of $\mathcal X(\Gamma)$ as a homomorphism $\alpha:\Gamma\to\Aut(T)$, the group $\Aut(\Gamma)$ acts by precomposition, and inner automorphisms act trivially, since $\alpha\circ \mathrm{conj}(\gamma)$ is conjugate to $\alpha$ by $\alpha(\gamma)\in\Aut(T)$.

The following result is proved by Hensel--Kielak \cite[Thm.\ 6.1]{hensel-kielak}. 

\begin{thm}[Hensel--Kielak]\label{thm:hensel-kielak}
Let $\Gamma$ be a finitely generated group, and let $G<\Out(\Gamma)$ be a finite subgroup. Then the action of $G$ on $\mathcal{PO}(\Gamma)$ fixes a barycenter. Equivalently, the action of $G$ on $\mathcal X(\Gamma)$ has a fixed point. 
\end{thm}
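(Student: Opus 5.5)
We plan to prove Theorem \ref{thm:hensel-kielak} by reduction to a realization statement for graphs of groups, following the strategy of Karrass--Pietrowski--Solitar, Culler, Khramtsov and Zimmermann for $F_n$. Let $G<\Out(\Gamma)$ be finite and let $\wtil G<\Aut(\Gamma)$ be its full preimage, so $\wtil G$ is an extension
\beq 1\to \Inn(\Gamma)\to \wtil G\to G\to 1.\eeq
The problem is easiest in the centerless case. The free product $\Gamma=\Gamma_1*\cdots*\Gamma_m*F_n$ has trivial center unless $\Gamma\cong\Z$ or $\Gamma\cong\Z/2*\Z/2$, and these degenerate cases can be checked by hand. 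In the centerless case $\Inn(\Gamma)\cong\Gamma$, so $\wtil G$ contains $\Gamma$ as a normal subgroup of finite index.

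\textbf{Step 1 (a tree for the extension).} Since $\Gamma$ is finitely generated, so is $\wtil G$. By Stallings/Dunwoody accessibility in the form relevant here (Linnell accessibility for splittings over finite groups), $\wtil G$ acts on a simplicial tree $T$ with finite edge stabilizers and vertex stabilizers having at most one end. Restricting to the finite-index subgroup $\Gamma$, edge stabilizers become trivial, because $\Gamma$ is torsion-free relative to the factors: finite subgroups of $\Gamma$ are conjugate into some $\Gamma_i$, and we must also arrange that edges are not fixed by such elements. Vertex stabilizers become one-ended finite-index subgroups of $\wtil G_v$, or finite. To get the stabilizers of Grushko type, we would instead use the \emph{relative} version of accessibility. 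We take the decomposition of $\wtil G$ relative to the family of subgroups $N_{\wtil G}(\Gamma_i)$-type stabilizers, that is, the subgroups of $\wtil G$ that preserve the conjugacy class of each $\Gamma_i$ inside $\Gamma$. These are well defined because $\wtil G$ permutes the conjugacy classes of the $\Gamma_i$ by Grushko uniqueness. This yields a $\wtil G$-tree on which each $\Gamma_i$ is elliptic and the remaining vertex stabilizers meet $\Gamma$ trivially.

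\textbf{Step 2 (restriction gives a Grushko tree).} Restrict the $\wtil G$-action on $T$ to $\Gamma=\Inn(\Gamma)$. We then pass to the minimal $\Gamma$-invariant subtree, which is $\wtil G$-invariant by normality and uniqueness of the minimal subtree. Next we collapse edges and fold to remove redundant degree-2 vertices and to ensure exactly one orbit of vertices with stabilizer conjugate to each $\Gamma_i$. Every such operation must be canonical, so that it remains $\wtil G$-equivariant. The result is a Grushko tree $T'$ carrying an action of $\wtil G$ extending that of $\Gamma$.

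\textbf{Step 3 (fixedness).} For $\phi\in\wtil G$, the tree $T'$ twisted by $\phi$ is $\Gamma$-equivariantly isomorphic to $T'$ via the action of $\phi$ itself, since $\phi\cdot(\gamma x)=\phi(\gamma)\cdot\phi x$. Hence the class $[T']\in\mathcal X(\Gamma)$ is fixed by $G$.

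The equivalence with fixing a barycenter is formal. The fixed simplex is $G$-invariant, so $G$ permutes its vertices and therefore fixes its barycenter. Conversely, a fixed barycenter determines a fixed open simplex.

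\textbf{Main obstacle.} The crux is Step 1: producing a $\wtil G$-tree whose restriction to $\Gamma$ has exactly the Grushko stabilizers. There are two difficulties. First, elements of $\wtil G$ of finite order may fix edges in a way incompatible with trivial edge stabilizers for $\Gamma$. Second, the canonical cleanup in Step 2 must not destroy equivariance. We would first try to carry out the cleanup using only collapses of $\wtil G$-orbits of edges, which are automatically equivariant, together with an induction on the number of edge orbits.
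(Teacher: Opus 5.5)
\textbf{The paper does not prove this statement; it quotes it from Hensel--Kielak} \cite[Thm.\ 6.1]{hensel-kielak}. Your framework is sound. Passing to the preimage $\wtil G<\Aut(\Gamma)$, building a $\wtil G$-tree and restricting to $\Gamma=\Inn(\Gamma)$ is the Karrass--Pietrowski--Solitar strategy that they generalize, and your Step 3 and the barycenter remark are correct.

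\textbf{There is a genuine gap in Step 1.} The sentence ``This yields a $\wtil G$-tree on which each $\Gamma_i$ is elliptic and the remaining vertex stabilizers meet $\Gamma$ trivially'' is essentially the whole theorem, and nothing you invoke provides it.
\begin{enumerate}[(i)]
\item Absolute accessibility is not available in this generality. Linnell needs a bound on the orders of finite edge groups, Dunwoody needs finite presentability, and arbitrary finitely generated $\Gamma_i$ need not satisfy either.
\item Even when accessibility holds, the vertex groups of the resulting decomposition of $\wtil G$ have at most one end. So a freely indecomposable factor with infinitely many ends (e.g.\ $\Gamma_i=\SL_2(\Z)$) can never be a $\Gamma$-vertex stabilizer.
\item Your relative version uses the wrong family. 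The subgroup of $\wtil G$ preserving the conjugacy class $[\Gamma_i]$ contains all of $\Inn(\Gamma)$, so making it elliptic forces $\Gamma$ to fix a vertex. What must be elliptic are the conjugates $\Gamma_i^g$, or equivalently their normalizers $N_{\wtil G}(\Gamma_i^g)$, which contain $\Gamma_i^g$ with finite index.
\item Even with the correct family, the missing input is a relative Stallings theorem. If a finitely generated group $H$ has a finite-index normal subgroup $K$ with a nontrivial free splitting in which the factor conjugates lying in $K$ are elliptic, you need $H$ to split nontrivially over a finite group with those subgroups still elliptic. Classical Stallings gives a splitting of $H$ over some finite group, but with no control over the factors.
\end{enumerate}
A version of Stallings' theorem with exactly this control over virtual free factors is the new ingredient in Hensel--Kielak's work. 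Combined with a termination argument (which can use Grushko complexity instead of Linnell), it produces the tree you want. As written, your proposal assumes it.

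\textbf{The edge-stabilizer issue you single out is not the real obstacle.} Suppose $\wtil G$ acts on a tree with finite edge stabilizers, every $\Gamma_i$ is elliptic, and every $\Gamma\cap\wtil G_v$ is conjugate into a factor. Collapse the $\wtil G$-invariant set of edges with nontrivial $\Gamma$-stabilizer.
\begin{enumerate}[(i)]
\item Along a collapsed component $Y$, all $\Gamma$-vertex stabilizers lie in a single conjugate $P=\Gamma_i^g$, since distinct factor conjugates meet trivially.
\item Malnormality of $P$ gives $\mathrm{Stab}_\Gamma(Y)\le P$. Since $\mathrm{Stab}_\Gamma(Y)$ is a nontrivial free factor of $\Gamma$, it equals $P$.
\item The last step applies equally to uncollapsed vertices.
\end{enumerate}
So you get a free splitting whose nontrivial vertex stabilizers are exactly the factor conjugates. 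The canonical cleanup in Step 2 then finishes: take the minimal subtree and forget degree-2 vertices with trivial stabilizer; no folding is needed.

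In particular, your route does work when $\Gamma$ is finitely presented and every $\Gamma_i$ has at most one end, which covers the 3-manifold groups in this paper. There, Dunwoody accessibility for $\wtil G$ supplies such a tree. Its $\Gamma$-vertex groups are finite or one-ended, hence conjugate into a factor by Kurosh, and finite or one-ended factors are automatically elliptic. The general statement still needs the relative splitting theorem.

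A minor point: $\Z/2*\Z/2$ is the infinite dihedral group and is centerless. The only degenerate case is $\Gamma$ freely indecomposable, where $\mathcal X(\Gamma)$ is a point.
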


The case $\Gamma=F_n$ of Theorem \ref{thm:hensel-kielak} was proved earlier by Culler, Khramtsov, and Zimmermann (independently). In this case $\mathcal{PO}(F_n)$ is the same as the Culler--Vogtmann outer space. We also note Theorem \ref{thm:hensel-kielak} is most interesting when $\Gamma$ is decomposable, since otherwise $\mathcal X(\Gamma)$ is a point. We record the following lemma for future reference.

\begin{lem}\label{lem:orbits}
The orbits of $\Out(\Gamma)$ acting on $\mathcal X(\Gamma)$ are in bijection with unmarked graph of groups decompositions for $\Gamma$, where the edge groups are trivial and the vertex groups are indecomposable. 
\end{lem}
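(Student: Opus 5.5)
The plan is to use Bass--Serre theory to translate between Grushko trees and graphs of groups, and to check that the $\Out(\Gamma)$-action on markings is exactly what becomes invisible once markings are forgotten. Given $T\in\mathcal X(\Gamma)$, the quotient graph of groups $(X,\mathcal G)=T/\Gamma$ has trivial edge groups, since edge stabilizers fix an edge and hence are contained in point stabilizers. It also has exactly $m$ vertices with nontrivial vertex groups, namely the $\Gamma_i$ up to conjugacy, all of which are indecomposable, and all other vertex groups trivial. So $T/\Gamma$ is a graph of groups of the stated type, with an isomorphism $\pi_1(X,\mathcal G)\cong\Gamma$ that is well defined up to inner automorphisms. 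I will define the map $\Psi$ from $\mathcal X(\Gamma)$ to unmarked decompositions by $T\mapsto (X,\mathcal G)$, up to isomorphism of graphs of groups. Here I also need to impose the minimality condition (i) on the unmarked side: no valence-one vertices with trivial vertex group, and no valence-two vertices with trivial group. Equivalently, I should interpret ``graph of groups decomposition'' as a reduced or minimal one; I will make that convention explicit.

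Step 1: $\Psi$ is constant on $\Out(\Gamma)$-orbits. Precomposing $\alpha:\Gamma\to\Aut(T)$ by $\phi\in\Aut(\Gamma)$ leaves the underlying tree unchanged and changes only which element acts by which automorphism. The image subgroup $\alpha(\Gamma)$ is the same, so the quotient graph and its vertex stabilizers, up to isomorphism, are unchanged. Only the marking changes, by $\phi$.

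Step 2: $\Psi$ is surjective. Given an unmarked decomposition $(X,\mathcal G)$ with trivial edge groups, indecomposable vertex groups and $\pi_1(X,\mathcal G)\cong\Gamma$, choose any isomorphism $\pi_1(X,\mathcal G)\cong\Gamma$ and take the Bass--Serre tree. The vertex groups give a free product decomposition $\ast_v G_v * F_{b_1(X)}$. Discarding trivial vertex groups, Grushko's theorem shows that the nontrivial ones are, up to conjugacy, exactly the $\Gamma_i$, each occurring once. The one caveat is that indecomposable vertex groups isomorphic to $\Z$ must be excluded, or else absorbed into the free part by subdividing; I will treat this point in the statement's convention. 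Thus condition (ii) holds, and the minimality of the decomposition gives condition (i).

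Step 3: $\Psi$ is injective on orbits. Suppose $T_1,T_2$ have isomorphic quotient graphs of groups. An isomorphism of graphs of groups induces an isomorphism $\theta:\pi_1(X_1,\mathcal G_1)\to\pi_1(X_2,\mathcal G_2)$ together with a $\theta$-equivariant isomorphism of the Bass--Serre trees. Composing with the markings gives $\phi\in\Aut(\Gamma)$ and a tree isomorphism $T_1\to T_2$ intertwining $\alpha_1$ with $\alpha_2\circ\phi$. Hence $T_1=\phi\cdot T_2$ in $\mathcal X(\Gamma)$.

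I expect the main obstacle to be the bookkeeping in Step 3. An isomorphism of graphs of groups is only defined up to conjugation data on vertex groups and choices of base points, so I must check that the resulting $\phi$ is well defined up to $\Inn(\Gamma)$ and that the equivariant tree map actually exists. This is standard Bass--Serre theory: the structure theorem for the Bass--Serre tree is functorial under isomorphisms of graphs of groups. I would cite that functoriality rather than reprove it.
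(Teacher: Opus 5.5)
Your proposal is correct and is essentially the paper's argument. The paper simply invokes Bass--Serre theory to identify a point of $\mathcal X(\Gamma)$ with a marked graph of groups and observes that $\Aut(\Gamma)$ only changes the marking; your Steps 1--3 spell that out, and your added conventions (minimality on the unmarked side, and excluding $\Z$ vertex groups) are exactly the hypotheses the statement leaves implicit.
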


The lemma follows directly from Bass--Serre theory, which gives that a point of $\mathcal X(\Gamma)$ can equivalently be viewed as a marked graph of groups (with the properties stated in the lemma), where the marking is an isomorphism of the fundamental group of the graph of groups with $\Gamma$. The action of $\Aut(\Gamma)$ changes the marking. 

\section{Recollection of Hatcher's argument}\label{sec:hatcher-argument}

In this section we recall Hatcher's argument that, for the free group $F_n$, simplices of $\mathcal{PO}(F_n)$ are in bijection with isotopy classes of essential, filling sphere systems on $(S^1\times S^2)^{\#n}$. This will be generalized in Theorem \ref{thm:sphere-tree}. 

We begin with a general setup. Fix a closed orientable 3-manifold 
\[M=M_1\#\cdots\# M_m\#(S^1\times S^2)^{\#n}.\] Write $\mathcal S(M)$ for the set of isotopy classes of essential, filling sphere systems in $M$, where we additionally require that no two spheres in the sphere system are parallel (bound an $S^2\times[0,1]$). 
The pre-image of a sphere system $S\subset M$ in the universal cover has a dual tree with an action of $\pi_1(M)$; this defines a map
\[\Phi:\mathcal S(M)\to\mathcal X(\pi_1M),\]
where $\mathcal X(\pi_1M)$ is the set of Grushko trees for $\pi_1(M)$, c.f.\ Section \ref{sec:outer-space}. 
Ultimately, we show that $\Phi$ is a bijection. In this section, we explain how this works for $M=(S^1\times S^2)^{\#n}$ with $n\ge2$, following an argument of Hatcher. In Section \ref{sec:hatcher-generalized} we generalize this argument to arbitrary $M$. 

For the rest of this section, we fix $n\ge2$ and set $M=(S^1\times S^2)^{\#n}$. In this case $\pi_1(M)$ is a free group $F_n$. 

It will be convenient to identify $\mathcal X(F_n)$ as the set of equivalence classes of minimal, marked graphs with rank $n$ (\`a la Culler--Vogtmann). 
Here minimal means the graph has no vertices of degree 1 or 2. The marking is a homotopy equivalence $f:X_0\to X$, where $X_0=\vee_n S^1$ (our fixed choice of reference graph). Two markings $(X,f)$ and $(X',f')$ are equivalent if there is a homeomorphism $h:X\to X'$ so that $hf$ and $f'$ are homotopic. An element $T\in\mathcal X(F_n)$ determines a rank-$n$ graph $X=T/F_n$ and a marking can be obtained by fixing a basepoint $*\in T$, identifying $F_n=\pi_1(X_0)$ in the standard way, and sending the $i$-th loop in $X_0=\vee_n S^1$ to the image of the path in $T$ from $*$ to $\gamma_i(*)$. 

With the identification of $\mathcal X(F_n)$ with marked graphs, the map $\Phi:\mathcal S(M)\to\mathcal X(F_n)$ takes the following form. 
View $M$ as obtained from $S^3$ by attaching $n$ handles. Then the union of belt spheres in each handle defines a sphere system $S_0$. The dual graph $X(S_0)$ to this sphere system is $X_0$ (maybe after a change of coordinates). The dual graph $X(S)$ of an arbitrary sphere system $S\in\mathcal S(M)$ has a marking $f_S:X(S_0)\to X(S)$ obtained by a composition $X(S_0)\to M\to X(S)$, where the first map is an inclusion of the dual graph and the second map is a projection that collapses complementary components of (a regular neighborhood of) $S$ to points (and collapses each component of the regular neighborhood to an interval). 

The following result appears in \cite[Appendix]{hatcher-stability-autfn}.

\begin{prop}[Hatcher]\label{prop:hatcher}
The map $\Phi:\mathcal S(M)\to\mathcal X(F_n)$ defined by $\Phi(S)= (X(S),f_S)$ is a bijection.
\end{prop}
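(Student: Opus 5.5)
We prove that $\Phi$ is well defined, surjective, and injective, in that order; surjectivity is essentially free given Laudenbach, and injectivity is where the real work lies.

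\textbf{Well-definedness.} Let $S\in\mathcal S(M)$. The complementary components of $S$, with balls glued in, are irreducible and simply connected. Since $M=(S^1\times S^2)^{\#n}$, they are in fact copies of $S^3$ (prime decomposition plus Poincaré/Kneser). Hence each complementary component of $S$ is a punctured $S^3$. The dual graph $X(S)$ then has $\pi_1(X(S))\cong\pi_1(M)=F_n$, and the composite $X(S_0)\to M\to X(S)$ is $\pi_1$-surjective. It is also injective, because $X(S_0)\to M$ is a $\pi_1$-isomorphism and $M\to X(S)$ is a $\pi_1$-isomorphism (van Kampen, since the pieces are simply connected). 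So $f_S$ is a marking.

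Minimality of $X(S)$ comes from the hypotheses on $S$:
\begin{itemize}
\item a degree-1 vertex is a complementary piece that is a ball with one boundary sphere, which would make that sphere inessential;
\item a degree-2 vertex is a piece that is $S^2\times I$, which would make two spheres parallel (or one nonseparating sphere cobound $S^2\times I$ with itself, impossible).
\end{itemize}
Isotopic sphere systems give equivalent marked graphs, since isotopy changes the projection only by a homotopy.

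\textbf{Surjectivity.} Given a minimal marked graph $(X,f)$, thicken $X$ in $\R^3\subset S^3$ and double the thickening. This gives a manifold $M_X\cong(S^1\times S^2)^{\#n}$ with a sphere system $S_X$, consisting of the doubled midpoint disks of the edges, whose dual graph is $X$. Choose any diffeomorphism $h:M\to M_X$. The marking $f_{h^{-1}(S_X)}$ differs from $f$ by some element $\phi\in\Out(F_n)$. By Laudenbach, $\Homeo(M)\to\Out(F_n)$ is surjective, so we may precompose $h$ by a homeomorphism realizing $\phi^{-1}$; the resulting $S=h^{-1}(S_X)$ satisfies $\Phi(S)=(X,f)$.

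\textbf{Injectivity.} This is the main step. Suppose $\Phi(S)=\Phi(S')$. First use a homeomorphism $h:X(S)\to X(S')$ compatible with the markings. Because each complementary piece is a punctured $S^3$, and $h$ matches pieces and their boundary spheres, we can build a diffeomorphism $g:M\to M$ with $g(S)=S'$ that induces $h$ on dual graphs. Orientations can be matched by reflecting punctured balls if needed. Then $g$ acts trivially on $\Out(F_n)$, since the markings agree up to homotopy.

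By Laudenbach, the kernel of $\Mod(M)\to\Out(F_n)$ is generated by sphere twists. So it suffices to show that every sphere twist $\tau_\Sigma$ fixes the isotopy class of any sphere system $S$. This is the delicate point. Laudenbach's result is that two embedded spheres which are homotopic are isotopic; applied componentwise (or via his version for systems), it gives $\tau_\Sigma(S)\simeq S$, hence $\tau_\Sigma(S)$ is isotopic to $S$. Alternatively, one can first isotope $\Sigma$ to be disjoint from or meet $S$ minimally, using Hatcher normal form, and then check directly that the twist preserves each component up to isotopy.

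Thus $g$ is isotopic to a product of twists, and $S'=g(S)$ is isotopic to $S$. We expect the main obstacle to be justifying that "homotopic implies isotopic" holds for whole sphere systems, not merely individual spheres. To handle this, I would invoke Laudenbach's theorem directly for systems of disjoint spheres.
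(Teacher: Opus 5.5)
Your argument is in substance the paper's. The paper defines an inverse $\Psi$ by correcting the marking with Laudenbach's surjectivity, which is your surjectivity step. The well-definedness of $\Psi$ together with $\Psi\circ\Phi=\mathrm{id}$ (via the canonical identification $(M_{X(S)},S_{X(S)})\cong(M,S)$) is exactly your injectivity argument: build an orientation-preserving $g$ with $g(S)=S'$, observe $g\in\ker(\Mod^+(M)\to\Out(F_n))$, and invoke Laudenbach's description of that kernel.

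The one step that does not hold up as written is your claim that every sphere twist fixes every sphere system. Laudenbach's theorem turns a homotopy of sphere systems into an isotopy. It does not produce the homotopy $\tau_\Sigma(S)\simeq S$, and you give no reason for that homotopy. Nor can you get it from $\tau_\Sigma\simeq\mathrm{id}$: the twist of $S^1\times S^2$ along $\{pt\}\times S^2$ is not homotopic to the identity, by Pontryagin's classification of maps $S^1\times S^2\to S^2$. Your normal-form alternative is only gestured at.

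The missing observation, which is how the paper closes this, is that you never need an arbitrary twist. The kernel is normal, and it is generated by twists along the spheres of \emph{any} maximal nonseparating system: conjugate Laudenbach's standard generators by a diffeomorphism carrying the standard system to the given one. Moreover, $S$ itself contains such a system, namely the $n$ spheres dual to the edges of $X(S)$ outside a spanning tree; their complement is a $2n$-punctured $S^3$. Twists along spheres of $S$ preserve $S$ on the nose. So $g$ is isotopic to a product of such twists, and $S'=g(S)$ is isotopic to $S$. Alternatively, you could check directly that a twist acts trivially on $\pi_2(M)\cong H_2(\wtil M)$ and then apply Laudenbach.

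Finally, the obstacle you anticipate is not one. Laudenbach's homotopy-implies-isotopy theorem is already stated for sphere systems (see \cite[Thm.\ 2.2]{hatcher-wahl}), and the paper uses it in that form in the proof of Theorem \ref{thm:sphere-tree}.
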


\begin{proof}
Define an inverse $\Psi:\mathcal X(F_n)\to\mathcal S(M)$. Given $f:X_0\to X$, build a manifold $M_X\cong M$ by taking a copy of $S^3$ with the interior of $\deg(v)$ disks removed for each vertex $v$, and gluing these according to the edge relations in $X$. The spheres along which the gluing occurs define a filling sphere system $S_X\subset M_X$. Using that $\Homeo(M)\to\Out(\pi_1M)$ is surjective by \cite{laudenbach}, one shows that there is a homeomorphism $F:M\to M_X$ that makes the following diagram commute on the level of $\pi_1(-)$.\footnote{Fix any homeomorphism $h: M\to M_X$. The composition $M\onto X_0\xrightarrow{f} X\hookrightarrow M_X\xrightarrow{h^{-1}} M$ defines an (outer) automorphism $\alpha:\pi_1(M)\to\pi_1(M)$ (note that vertical maps in the diagram are $\pi_1$-isomorphisms). Now choose $a\in\Homeo(M)$ inducing $\alpha$, and define $F=h\circ a$.}
\[\begin{xy}
(-20,0)*+{M}="A";
(0,0)*+{M_X}="B";
(-20,-15)*+{X_0}="C";
(0,-15)*+{X}="D";
{\ar@{-->}"A";"B"}?*!/_3mm/{F};
{\ar"C";"D"}?*!/_3mm/{f};
{\ar@{->>} "A";"C"}?*!/_3mm/{};
{\ar@{^{(}->} "D";"B"}?*!/^3mm/{};
\end{xy}\]
Here the vertical maps are the projection and inclusion associated to the sphere systems $S_0\subset M$ and $S_X\subset M_X$, respectively. Since $\Homeo^+(M)\to\Out(\pi_1M)$ is surjective we can furthermore choose $F$ to be orientation-preserving. Given this, define $\Psi(X,f)=F^{-1}(S_X)$. This sphere system of $M$ is well-defined up to isotopy because $F\in\Homeo^+(M)$ is well-defined up to sphere twists (along the $S_X$), which act trivially on $S_X$.

To check that $\Phi$ and $\Psi$ are inverses, first fix $S\in\mathcal S(M)$. To compute $\Psi(\Phi(S))=\Psi(X(S),f_S)$, observe that the pair $(M_{X(S)},S_{X(S)})$ in the construction of $\Psi(X(S),f_S)$ is canonically homeomorphic to $(M,S)$, so we can take the marking $F:M\to M_{X(S)}=M$ to be the identity, in which case $\Psi(X(S),f_S)=S$. 

For the reverse composition, fix $(X,f)\in\mathcal X(F_n)$. By definition, 
\[\Phi(\Psi(X,f))=\Phi(F^{-1}(S_X))=(X(F^{-1}(S_X)), f_{F^{-1}(S_X)}),\]
and $X(F^{-1}(S_X))$ is the dual graph of $F^{-1}(S_X)\subset M$, or equivalently, the dual graph of $S_X\subset M_X$. Consequently, the marking $f_{F^{-1}(S_X)}$ is the map $X_0\hookrightarrow M\xrightarrow{F} M_X\onto X$, which is homotopic to $f:X_0\to X$ by construction. This completes the proof. 
\end{proof}

\section{\boldmath Image of $\Homeo(M)\to\Out(\pi_1M)$ for reducible $M$} \label{sec:image}

In this section, for a closed oriented reducible $3$-manifold $M$, we describe the image of $\Homeo(M)\to\Out(\pi_1M)$ using work of Fouxe-Rabinovitch \cite{FR1,FR2}. See Proposition \ref{prop:image-mod-out} for a precise statement. This should be viewed as a relative computation, since part of the answer is expressed in terms of the image of $\Homeo(N)\to\Out(\pi_1N)$ for $N$ an irreducible factor of $M$. This is complemented by a computation of the image of $\Homeo(N)\to\Out(\pi_1N)$ for arbitrary irreducible $N$ given in Section \ref{sec:image-irreducible}. 

Fix $M=M_1\#\cdots\# M_m\#(S^1\times S^2)^{\#n}$ and write $\Gamma=\Gamma_1*\cdots*\Gamma_m*F_n$ 
for the corresponding decomposition of the fundamental group $\Gamma:=\pi_1(M)$. In particular $\Gamma_i\cong\pi_1(M_i)$ for $i=1,\ldots,m$. 

In Proposition \ref{prop:image-mod-out}, we describe the image of $\Homeo(M)\to\Out(\Gamma)$ in terms of a surjective homomorphism
\begin{equation}\label{eqn:factor-auts}q:\Out(\Gamma)\to\prod\Out(\Gamma_i)\rtimes P_\Gamma,\end{equation}
where $P_\Gamma$ is the group of permutations of $\{1,\ldots,m\}$ that only permute indices of isomorphic factors $\Gamma_i\cong\Gamma_j$. To give the precise statement, we use the following notation. Define $P_M<P_\Gamma$ as the subgroup of permutations of $\{1,\ldots,m\}$ that only permute indices of factors $M_i\cong M_j$ that are homeomorphic (ignoring orientation). Let $\Out_{M_i}(\Gamma_i)$ denote the image of $\Homeo(M_i)\to\Out(\Gamma_i)$. For each $M_i$, the degree of a homeomorphism realizing an automorphism defines a homomorphism 
\[
\Out_{M_i}(\Gamma_i)\to \{\pm1\}/A_i,
\]
where $A_i<\{\pm1\}$ is the subgroup of possible degrees of a homeomorphism of $M_i$ that induces the identity on $\Gamma_i$. The group $A_i$ is trivial unless $M_i=\mathbb RP^3$ in which case $A_i=\{\pm1\}$. Using this, define a homomorphism 
\begin{equation}\label{eqn:orientation-sign}
r:\prod_{i=1}^m\Out_{M_i}(\Gamma_i)\rtimes P_M\to \prod_iC_2/A_i\rtimes P_M
\end{equation}
by realizing an element of the domain as a homeomorphism $f$ of $\coprod_{i=1}^m M_i$ and defining 
\[r(f)=(\epsilon_1(f),\ldots,\epsilon_m(f),\sigma_f),\]
where $\sigma_f\in P_M$ is the permutation of components induced by $f$, and $\epsilon_i(f)$ is the degree of the restriction of $f$ to $M_i\to M_{\sigma_f(i)}$ with respect to the fixed orientations on the $M_1,\ldots,M_m$.

\begin{prop}\label{prop:image-mod-out}
Fix $M=M_1\#\cdots\#M_m\#(S^1\times S^2)^{\#n}$ and $\Gamma=\Gamma_1*\cdots*\Gamma_m*F_n$ as above. Let $q,r$ be the homomorphisms defined in $(\ref{eqn:factor-auts})$ and $(\ref{eqn:orientation-sign})$. Then the image of $\Homeo(M)\to\Out(\Gamma)$ is the subgroup $(r\circ q)^{-1}(\Delta\rtimes P_M)$, where $\Delta<\prod C_2/A_i$ is the diagonal subgroup (ignoring trivial factors of the product). 
\end{prop}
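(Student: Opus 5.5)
We prove the two inclusions separately, using the Fouxe-Rabinovitch generators of $\Out(\Gamma)$ for the reverse inclusion. Recall that $\Out(\Gamma)$ is generated by four kinds of automorphisms:
\begin{itemize}
\item \emph{factor automorphisms}, which apply $\phi_i\in\Aut(\Gamma_i)$ on one factor and fix the others;
\item \emph{permutations} of isomorphic factors $\Gamma_i\cong\Gamma_j$;
\item \emph{partial conjugations}, which conjugate one factor $\Gamma_i$ by an element of another factor or of $F_n$;
\item \emph{transvections} and Whitehead-type moves, which multiply a free generator of $F_n$ by elements of $\Gamma_j$ or of $F_n$, together with $\Aut(F_n)$ itself.
\end{itemize}
The kernel of $q$ is generated by the partial conjugations and the free-factor moves; this is Fouxe-Rabinovitch's description.

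\textbf{Image contained in $(r\circ q)^{-1}(\Delta\rtimes P_M)$.} Take $h\in\Homeo(M)$. By the uniqueness part of the Kneser--Milnor prime decomposition, and by Theorem~\ref{thm:meeks-yau}-style arguments (or by Laudenbach's normal form for homeomorphisms of reducible manifolds), $h$ is isotopic to a homeomorphism that preserves a fixed sphere system separating off the punctured $M_i$. Hence $h$ maps each punctured $M_i$ homeomorphically onto some punctured $M_{\sigma(i)}$. Capping off, we get homeomorphisms $f_i\colon M_i\to M_{\sigma(i)}$ with $\sigma\in P_M$. These induce the components of $q(h_*)$, because the factor $\Gamma_i$ is sent to a conjugate of $\Gamma_{\sigma(i)}$ via $(f_i)_*$. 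Since $h$ has a global degree, each $f_i$ has degree equal to $\deg h$, so $r(q(h_*))\in\Delta\rtimes P_M$. The quotient by $A_i$ accounts for $\mathbb RP^3$, where the degree is not determined by the induced automorphism.

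\textbf{Reverse inclusion.} First, every element of $\ker q$ is realized by a homeomorphism:
\begin{itemize}
\item a partial conjugation of $\Gamma_i$ by a loop $\gamma$ is realized by a slide homeomorphism, dragging the summand $M_i$ around $\gamma$;
\item transvections and $\Aut(F_n)$-moves are realized by slides of the $S^1\times S^2$ handles, and by Laudenbach's realization for $(S^1\times S^2)^{\#n}$.
\end{itemize}
Second, given $\phi$ with $r(q(\phi))\in\Delta\rtimes P_M$, write $q(\phi)=((\phi_i),\sigma)$. Choose homeomorphisms $f_i\colon M_i\to M_{\sigma(i)}$ realizing the $\phi_i$. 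The diagonal condition lets us choose all the $f_i$ of a common degree $\epsilon$, using the freedom in $A_i$ for the $\mathbb RP^3$ factors. After an isotopy we may assume each $f_i$ fixes a small ball, or reverses it compatibly. Then the $f_i$ glue, together with an identity or reflection on the $(S^1\times S^2)^{\#n}$ part of degree $\epsilon$, to a homeomorphism $h$ of $M$ with $q(h_*)=q(\phi)$. Note that a reflection of $S^1\times S^2$ exists with either action on $\pi_1$, so this can be arranged. Consequently $\phi h_*^{-1}\in\ker q$, which is realized by the first step, and $\phi$ lies in the image.

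\textbf{Main obstacle.} The delicate step is identifying $\ker q$ precisely with the subgroup generated by the realizable moves. In particular, one must check that the inner automorphisms of the factors $\Gamma_i$ and the global conjugations are all accounted for, so that no hidden factor automorphism sneaks into the kernel. I would extract this directly from Fouxe-Rabinovitch's presentation. A second subtlety is the claim that $h$ preserves the prime decomposition up to isotopy, which I would justify via the uniqueness of the prime decomposition together with the standard fact that any two sphere systems of this type are related by slides.
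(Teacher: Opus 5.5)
\textbf{The forward inclusion rests on a false claim.} Your reverse inclusion is essentially the paper's argument. You realize $q(\phi)$ by gluing factor homeomorphisms of a common degree into the model of $M$, and then realize $\ker q\cong W/\mathrm{Inn}(\Gamma)$ by slides and handle flips.

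The gap is in the forward inclusion, where you assert that $h$ is isotopic to a homeomorphism preserving the fixed sphere system $S$. This is false in general.
\begin{itemize}
\item Uniqueness of the prime decomposition only says that $h(S)$ and $S$ cut $M$ into homeomorphic pieces. It says nothing about the spheres being isotopic.
\item Meeks--Yau type results concern finite groups, not a single mapping class of possibly infinite order.
\end{itemize}

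Here is a counterexample. Let $M=M_1\#M_2\#M_3$, and let $S$ be the three spheres cutting the $M_i$ off a central punctured $S^3$. Let $h$ slide $M_1$ around a loop in $M_2$ representing some $1\neq a\in\Gamma_2$. Then $h_*$ is the partial conjugation $\Gamma_1\mapsto a\Gamma_1a^{-1}$ fixing $\Gamma_2$ and $\Gamma_3$.
\begin{itemize}
\item In the tree dual to $S$, the fixed vertices of $\Gamma_1,\Gamma_2,\Gamma_3$ have a common neighbor, namely a lift of the central piece.
\item The fixed vertices of $a\Gamma_1a^{-1},\Gamma_2,\Gamma_3$ have no common neighbor, because $a\notin\Gamma_3\Gamma_1$.
\end{itemize}
So the dual trees of $S$ and $h(S)$ are not equivariantly isomorphic, and $h(S)$ is not isotopic to $S$. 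Your closing remark that such sphere systems are ``related by slides'' is the right idea, but a slide is not an isotopy.

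\textbf{How to repair it.} What is true, and what the paper uses, is \cite[Lem.\ 1.4]{mccullough}: there is a composition of slides $s$ with $sh(S)=S$. Your restriction-and-degree argument then applies verbatim to $sh$ and gives $r(q((sh)_*))\in\Delta\rtimes P_M$.

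To pass back to $h$, you need a fact you never state. Every slide homeomorphism induces a Whitehead automorphism (a partial conjugation or a transvection). Hence $s_*\in W$, and $q((sh)_*)=q(s_*)\,q(h_*)=q(h_*)$.

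This is the converse of what you use in the reverse inclusion. There you show that elements of $\ker q$ are induced by slides; here you need that slides induce elements of $\ker q$.
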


\begin{rmk}
Hatcher--Wahl \cite[Prop.\ 2.2]{hatcher-wahl} give a sufficient condition for the homomorphism $\Homeo^+(M)\to\Out(\Gamma)$ to be surjective. Proposition \ref{prop:image-mod-out} strengthens their result, essentially proving a converse. Note additionally that we do not restrict to orientation-preserving homeomorphisms.  
\end{rmk}

\subsection{\boldmath Structure of $\Aut(\Gamma)$}

To understand the image of $\Homeo(M)\to\Out(\Gamma)$, we first recall the algebraic structure of $\Aut(\Gamma)$. By work of Fouxe-Rabinovitch \cite{FR1,FR2} (see also \cite{gilbert}), $\Aut(\Gamma)$ is a product of two subgroups: 
\[
\Aut(\Gamma)=W\left[\prod\Aut(\Gamma_i)\rtimes P_\Gamma\right]. 
\]
The subgroup $\prod\Aut(\Gamma_i)\rtimes P_\Gamma$ is the group of factor automorphisms and permutations. The group $P_\Gamma$ is isomorphic to the group of permutations of $\{1,\ldots,m\}$ that only permute indices of isomorphic factors $\Gamma_i\cong\Gamma_j$. The subgroup $W$ is normal in $\Aut(\Gamma)$ and is generated by Whitehead automorphisms and factor inversions. To define these, fix a free basis $s_1,\ldots,s_n$ for the free factor $F_n$ of $\Gamma$. Consider the following types of automorphisms (here we specify an automorphism by its action on the $G_i$ and the $s_j$, and we only specify the action on the factor(s) where the automorphism is nontrivial). 
\begin{enumerate}[(i)]
\item Conjugate $G_k$ by either $a_\ell\in G_\ell$ for $\ell \neq k$ or by $s_j$. 
\item Multiply $s_k$ by either $s_\ell$ for $\ell \neq k$ or by $a_i\in G_i$, either on the left or the right. 
\item Send $s_k$ to $s_k^{-1}$. 
\end{enumerate} 
Types (i) and (ii) are called Whitehead automorphisms and automorphisms of type (iii) are called factor inversions. 

Although $W$ is normal, there is typically not a well-defined projection $\Aut(\Gamma)\to\prod\Aut(\Gamma_i)\rtimes P_\Gamma$ because the two subgroups $W$ and $\prod\Aut(\Gamma_i)\rtimes P_\Gamma$ have nontrivial intersection when $n\ge1$ (recall that $n$ denotes the rank of the largest free factor in $\Gamma$). For example if $\Gamma=A*\langle s\rangle$, then if $a\in A$ is fixed and $c,\ell,r\in\Aut(\Gamma)$ are defined by $c(x)=sxs^{-1}$ for $x\in A$ and $\ell(s)=as$ and $r(s)=sa$ (all unspecified values of $c,\ell,r$ on generators of $\Gamma=\langle A,s\rangle$ are the identity), then $c\ell c^{-1} r^{-1}$ is the factor automorphism given by conjugation by $a^{-1}$ on $A$.  Nevertheless, there is a homomorphism  
\[
q:\Out(\Gamma)\to\prod\Out(\Gamma_i)\rtimes P_\Gamma. 
\]
The kernel of $q$ is isomorphic to $W/\mathrm{Inn}(\Gamma)$. 
See \cite[\S12.3]{canary-mccullough} for further discussion. There this homomorphism is constructed using the Fouxe-Rabinovitch presentation for $\Aut(\Gamma)$.

\subsection{Proof of Proposition \ref{prop:image-mod-out}}

Our proof of Proposition \ref{prop:image-mod-out} involves some computation, so it will help to fix a model for $M$. We construct $M$ as in Figure \ref{fig:prime-decomposition} with the prime summands each glued to a single 3-sphere, and we take $S$ to be the associated sphere system. Fixing a basepoint $*\in M$, we have an obvious free product decomposition $\Gamma=\Gamma_1*\cdots*\Gamma_m*\Z^{*n}$ of $\Gamma=\pi_1(M,*)$. 

\begin{figure}[h]
\centering
\begin{overpic}[scale=1]{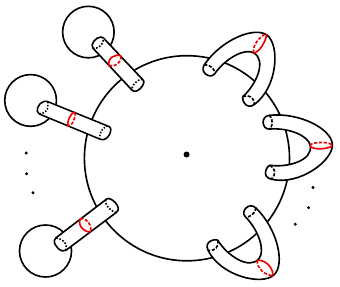}
\put (53,35){ $*$}
\put (10,77){\scriptsize$M_1$}
\put (-7.5,55){\scriptsize$M_2$}
\put (-4.5,7.5){\scriptsize$M_m$}
\put (79.5,75.5){\scriptsize\textcolor{red}{$S$}}
\put (80,-2){\scriptsize$S^2\times[0,1]$}
\end{overpic}
\caption{ A schematic for the sphere system $S$, pictured in red. Here $M$ is obtained from the disjoint union of $M_1,\ldots, M_m$ and a single copy of $S^3$ by attaching 1-handles $S^2\times [0,1]$. The sphere system $S$ is the collection of belt spheres.   }
\label{fig:prime-decomposition}
\end{figure}

We prove the proposition by showing both containments. Set 
\[\mathcal I:=(r\circ q)^{-1}(\Delta\rtimes P_M).\]
First we show the image of $\Homeo(M)\to\Out(\Gamma)$ is contained in $\mathcal I$. Suppose that $\phi\in\Out(\Gamma)$ is induced by a homeomorphism $f:M\to M$. According to \cite[Lem.\ 1.4]{mccullough}, there is a homeomorphism $s:M\to M$ that is a composition of slides so that $sf(S)=S$ for the standard sphere system $S$ from Figure \ref{fig:prime-decomposition}. Slide homeomorphisms induce Whitehead automorphisms on $\pi_1(M)$, so $sf$ and $f$ induce the same automorphism $\phi$ on $\pi_1(M)$. Since $sf$ preserves $S$, the image of $\phi$ in $\prod\Out(\Gamma_i)\rtimes P_\Gamma$ is clearly contained in $\prod\Out_{M_i}(\Gamma_i)\rtimes P_\Gamma$, and since $sf$ is a homeomorphism (with degree either $\pm1$), the further image of $sf$ in $\prod C_2/A_i\rtimes P_M$ is contained in $\Delta\rtimes P_M$. 

For the converse, suppose that $\phi$ belongs to $\mathcal I\subset \Out(\Gamma)$. To show $\phi$ is induced by a homeomorphism, first take a homeomorphism $g_1$ of $\coprod M_i$ that induces $q(\phi)\in\prod\Out_{M_i}(\Gamma_i)\rtimes P_M$. Using $g_1$ and our model for $M$ (Figure \ref{fig:prime-decomposition}), we can build a homeomorphism $g$ of $M$ such that $q(g_*)=q(\phi)$. Then $\phi\circ g_*^{-1}$ belongs to the kernel of $q$, and it suffices to show that any element of $\ker(q)<\Out(\Gamma)$ is induced by a homeomorphism of $M$. This is well-known and is explained, for example, in \cite[Proof of Prop.\ 2.2]{hatcher-wahl}. We illustrate it with the figures below. This completes the proof of Proposition \ref{prop:image-mod-out}. 
\begin{figure}[H]

\begin{subfigure}[t]{.35\textwidth}
\subcaption{A Whitehead (i) move.}
\begin{tikzpicture}
\tikzset{
  mid arrow/.style={
    postaction={
      decorate,
      decoration={
        markings,
        mark=at position 0.5 with {\arrow[#1]{stealth}}
      }
    }
  },
  mid arrow/.default=black
}
\shade[ball color=cyan!70!blue, opacity=.3] (3,3) circle (.5cm);
\draw[blue] (-.75,4.5)--(3.3,4.5);
\draw[blue](3.5,4.5)--(3.75,4.5);
\draw[red, mid arrow=red](2.57,3.25)--(0,3);
\draw[red, mid arrow=red](0,3)--(2.57,2.75);
\draw[color=orange,mid arrow=orange] (3,3.5) to[out=60,in=270](3.4,4.5) to[out=90,in=0](3,5) to[out=180,in=80](2.75,4.6);
\draw[color=orange](2.75, 4.4) to[out=275,in=120](3,3.5);
\filldraw (0,3) circle (2pt);
\node at (-.15,2.9){\scriptsize$*$};
\node at (3.45,4){\scriptsize$\alpha$};
\node at (1.5,3.3){\scriptsize$\gamma$};

\shade[ball color=cyan!70!blue, opacity=.3] (3,0) circle (.5cm);
\draw[blue] (-.75,1.5)--(.6,1.5);
\draw[blue] (.8,1.5) -- (.9,1.5);
\draw[blue](1.1,1.5)--(3.75,1.5);
\draw[red] (2.57,.25) to[out=170,in=280](2.3,1.4);
\draw[red,mid arrow=red] (2.25,1.6) to[out=105,in=0](1.5,2.3) to[out=180,in=90](.7,1.5) to[out=270,in=20] (0,0);
\draw[red, mid arrow=red] (0,0) to[out=-20,in=260] (1,1.5) to[out=80,in=180](1.5,2) to[out=0,in=105](2,1.6);
\draw[red] (2.05,1.4) to[out=280,in=190](2.57,-.25);
\filldraw (0,0) circle (2pt);
\node at (-.15,-.1){\scriptsize$*$};
\node at (1.5, -.3){\scriptsize$\alpha\gamma\overline{\alpha}$};
\end{tikzpicture}
\end{subfigure}
\qquad
\begin{subfigure}[t]{.66\textwidth}
%
\caption{The left and right Whitehead (ii) moves.}
\begin{tikzpicture}
\tikzset{
  mid arrow/.style={
    postaction={
      decorate,
      decoration={
        markings,
        mark=at position 0.52 with {\arrow[#1]{stealth}}
      }
    }
  },
  mid arrow/.default=black
}
\shade[ball color=cyan!70!blue, opacity=.3] (0,3) circle (.5cm);
\shade[ball color=cyan!70!blue, opacity=.3] (3,3) circle (.5cm);
\draw[blue] (-.75,4.5)--(3.3,4.5);
\draw[blue](3.5,4.5)--(3.75,4.5);
\draw[color=red, mid arrow=red](.5,3)--(1.5,3);
\draw[color=red,mid arrow=red](1.5,3)--(2.5,3);
\draw[color=orange,mid arrow=orange] (3,3.5) to[out=60,in=270](3.4,4.5) to[out=90,in=0](3,5) to[out=180,in=80](2.75,4.6);
\draw[color=orange](2.75, 4.4) to[out=275,in=120](3,3.5);
\filldraw (1.5,3) circle (2pt);
\node at (1.35,2.85){\scriptsize$*$};
\node at (3.45,4){\scriptsize$\alpha$};
\node at (1,3.3){\scriptsize$\gamma$};

\shade[ball color=cyan!70!blue, opacity=.3] (0,0) circle (.5cm);
\shade[ball color=cyan!70!blue, opacity=.3] (3,0) circle (.5cm);
\draw[blue] (-.75,1.5)--(1.7,1.5);
\draw[blue](1.9,1.5)--(3.75,1.5);
\draw[color=red, mid arrow=red](.5,0)--(1.5,0);
\draw[color=red,mid arrow=red](1.5,0) to[out=0,in=260](1.8,1.5) to[out=80,in=180](2.2,2) to[out=0,in=105](2.5,1.6);
\draw[color=red](2.55,1.4) to[out=280,in=150](2.5,0);
\filldraw (1.5,0) circle (2pt);
\node at (1.35,-.15){\scriptsize$*$};
\node at (1,.3){\scriptsize$\gamma\alpha$};
\end{tikzpicture}
%
\begin{tikzpicture}
\tikzset{
  mid arrow/.style={
    postaction={
      decorate,
      decoration={
        markings,
        mark=at position 0.52 with {\arrow[#1]{stealth}}
      }
    }
  },
  mid arrow/.default=black
}
\shade[ball color=cyan!70!blue, opacity=.3] (0,3) circle (.5cm);
\shade[ball color=cyan!70!blue, opacity=.3] (3,3) circle (.5cm);
\draw[blue] (-.75,4.5)--(.3,4.5);
\draw[blue](.5,4.5)--(3.75,4.5);
\draw[color=red, mid arrow=red](.5,3)--(1.5,3);
\draw[color=red,mid arrow=red](1.5,3)--(2.5,3);
\draw[color=orange,mid arrow=orange] (0,3.5) to[out=60,in=270](0.4,4.5) to[out=90,in=0](0,5) to[out=180,in=80](-.25,4.6);
\draw[color=orange](-.25, 4.4) to[out=275,in=120](0,3.5);
\filldraw (1.5,3) circle (2pt);
\node at (1.35,2.85){\scriptsize$*$};
\node at (2,3.3){\scriptsize$\gamma$};
\node at (.45,4){\scriptsize$\alpha$};

\shade[ball color=cyan!70!blue, opacity=.3] (0,0) circle (.5cm);
\shade[ball color=cyan!70!blue, opacity=.3] (3,0) circle (.5cm);
\draw[blue] (-.75,1.5)--(.7,1.5);
\draw[blue](.9,1.5)--(3.75,1.5);
\draw[color=red,mid arrow=red](1.5,0)--(2.5,0);
\draw[color=red,mid arrow=red](0.5,0) to[out=0,in=260](0.8,1.5) to[out=80,in=180](1.2,2) to[out=0,in=105](1.5,1.6);
\draw[color=red](1.55,1.4) to[out=280,in=150](1.5,0);

\filldraw (1.5,0) circle (2pt);
\node at (1.35,-.15){\scriptsize$*$};
\node at (2,0.3){\scriptsize$\alpha\gamma$};
\end{tikzpicture}
\end{subfigure}

\caption{A schematic picture of Whitehead moves of type (i) and type (ii). (The blue curve linking $\alpha$ is meant to indicate that $\alpha$ is non-trivial). In (a), the sphere cuts off some $M_i$  or an $S^1\times S^2$ factor of $M$, and $\gamma$ is a loop in that component. In (b), the spheres represent the two feet of a 1-handle attached to $S^3$ for the construction of $M$ as in Figure \ref{fig:prime-decomposition}, and $\gamma$ is a loop which goes once over the 1-handle. }
\label{fig:Whitehead}
\end{figure}

The image on the left in Figure \ref{fig:Whitehead} shows a homeomorphism that induces a Whitehead automorphism of type (i). The two images on the right show homeomorphisms that induce Whitehead automorphisms of type (ii). These figures can be viewed as local pictures in $\R^3$ that can be appropriately inserted into $M$. A factor inversion is induced by a homeomorphism that exchanges the attaching spheres of the two boundary components of $S^2\times[0,1]$ in a small ball in $S^3$.

\section{\boldmath Sphere systems and Grushko trees }\label{sec:hatcher-generalized}

In this section we generalize Hatcher's argument to prove the following theorem.

\begin{thm}\label{thm:sphere-tree}
Fix a closed, orientable $3$-manifold $M$. The map $\Phi:\mathcal S(M)\to\mathcal X(\pi_1M)$ (sending an essential, filling sphere system to its dual Grushko tree) is a bijection that's equivariant with respect to the map $\Mod(M)\to\Out(\pi_1M)$. 
\end{thm}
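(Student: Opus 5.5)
Equivariance is immediate from naturality of the dual tree. If $f\in\Diff(M)$ and $S\in\mathcal S(M)$, then the preimage of $f(S)$ in $\widetilde M$ is the image of the preimage of $S$ under a lift $\tilde f$. Hence the dual tree of $f(S)$ is the dual tree of $S$ with its $\pi_1(M)$-action precomposed by $f_*^{-1}$, up to inner automorphisms. Isotopic systems give equivariantly isomorphic trees.

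We also check that $\Phi$ lands in $\mathcal X(\pi_1M)$. Because $S$ is filling, each complementary piece, capped off, is irreducible, so it is a copy of some $M_i$ or $S^3$. Because $S$ is essential and has no parallel components, there are no degree-one vertices and no inessential degree-two vertices. The stabilizers are therefore exactly as required for a Grushko tree.

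For surjectivity we imitate Proposition \ref{prop:hatcher}, avoiding the surjectivity of $\Homeo(M)\to\Out(\pi_1M)$, which fails in general.

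\begin{itemize}
\item Fix the standard system $S_0$ of Figure \ref{fig:prime-decomposition}, with $T_0=\Phi(S_0)$.
\item Since $\Out(\Gamma)$ acts on $\mathcal X(\Gamma)$, any $T\in\mathcal X(\Gamma)$ lies in the $\Out(\Gamma)$-orbit of some tree $T_1$ whose underlying graph of groups has the same vertex groups $\Gamma_i$ (Lemma \ref{lem:orbits}).
\item Realize the graph of groups of $T$ directly by a model manifold. Glue copies of the punctured $M_i$ at the vertices stabilized by $\Gamma_i$, punctured $S^3$'s at the free vertices, and $S^2\times[0,1]$ along the edges. This gives $M_T$ with a sphere system $S_T$ whose dual tree is $T$ for a suitable marking. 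Note $M_T\cong M$ by uniqueness of prime decomposition, provided we use the correctly oriented $M_i$.
\item Choose any homeomorphism $h:M\to M_T$. Then $\Phi(h^{-1}S_T)$ and $T$ differ by some $\psi\in\Out(\Gamma)$ that stabilizes the orbit. It remains to absorb $\psi$.
\item Write $\psi$ as a product of an element of $\ker q$ and a factor-automorphism part. The $\ker q$ part is realized by slide homeomorphisms (Section \ref{sec:image}), and the factor part is realized by permuting and twisting the pieces. The problem is that the factor part need not satisfy the orientation constraint of Proposition \ref{prop:image-mod-out}.
\item To resolve this, use the freedom in constructing $M_T$ itself. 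The spheres of $S_T$ are determined by the tree, and the gluing of each piece may be done by any homeomorphism of $M_i$. Changing the vertex identifications by homeomorphisms of the individual $M_i$ changes the marking by precisely an element of $\prod\Out_{M_i}(\Gamma_i)\rtimes P_M$. There is no diagonal orientation constraint, because we only need $M_T\cong M$ abstractly.
\item The remaining factor automorphisms are not induced by homeomorphisms of $M_i$: non-realizable elements of $\Out(\Gamma_i)$, or permutations of isomorphic but non-homeomorphic lens spaces. These must be shown to act trivially on orbits of Grushko trees modulo the realizable part.
\end{itemize}

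The last step is the main obstacle. Concretely, I would show that if $T$ and $\psi T$ both arise from models, then $\psi$ can be replaced by an element of $\ker q$ times a realizable factor element. To do this, compare the stabilizer of $T$ with the projection $q$ and exploit the fact that a factor automorphism of $\Gamma_i$ fixing the vertex acts on $T$ only through the vertex group. If this fails, I would instead define $\Psi$ directly by choosing markings compatible with a tree-collapse to $T_0$, possibly via an induction on the number of edges using blow-ups and collapses.

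For injectivity, suppose $\Phi(S)=\Phi(S')$. Then the equivariant isomorphism of dual trees gives a homeomorphism of complementary pieces respecting the gluing, hence a homeomorphism $F:M\to M$ with $F(S)=S'$ and $F_*$ inner. It remains to show that $F$ is isotopic to a map preserving $S$, or directly that $S'$ is isotopic to $S$. For this I would use Laudenbach's theorem that homotopic sphere systems in $3$-manifolds are isotopic. Since $F$ acts trivially on $\pi_1$, the spheres $F(S_j)$ and $S_j$ determine the same splittings. Their lifts then have the same dual-tree edges, so the sphere systems are homotopic (in the generalized sense, up to the usual $\mathbb RP^3$ and sphere-twist ambiguities), hence isotopic.
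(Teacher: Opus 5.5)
Equivariance is fine, but both halves of the bijection have genuine gaps.

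\textbf{Surjectivity.} You leave open the step you call the main obstacle, and the way you frame it points the wrong way. The missing observation is that a factor automorphism $\beta\in\prod\Aut(\Gamma_i)$ fixes $T$ outright, provided $\beta$ is taken with respect to the free product decomposition read off from $T/\Gamma$ (vertex groups of a fundamental domain plus stable letters). Label vertices by cosets $x\Lambda$ of the stabilizers in the fundamental domain and set $\phi(x\Lambda)=\beta(x)\beta(\Lambda)$. This map preserves edges and satisfies $\rho\circ\beta=\phi\,\rho\,\phi^{-1}$. So non-realizable elements of $\Out(\Gamma_i)$, and the orientation constraint of Proposition~\ref{prop:image-mod-out}, never need to be absorbed. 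Permutations, by contrast, cannot be shown to act trivially. Take $M=L(11,1)\#L(11,2)\#L(11,3)$ and $T$ with quotient the path with vertex groups $\Gamma_1,\Gamma_2,\Gamma_3$ in that order. Swapping $\Gamma_1\leftrightarrow\Gamma_2$ gives a tree whose middle vertex is stabilized by a conjugate of $\Gamma_1$. That tree is not $\phi T$ for any $\phi$ induced by a homeomorphism (in particular it is not $T$). The paper instead realizes each transposition in $P_\Gamma$ geometrically. It builds the model with $M_j$ at $v_i$ and $M_i$ at $v_j$, which is still homeomorphic to $M$ because the multiset of summands is unchanged. Then it disk-drags the two summands back along a path in a maximal tree of $X$ and checks the resulting marking. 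So your relabeling freedom is available for all of $P_\Gamma$, not just $P_M$. Together with realizing $W$ by slides, this closes surjectivity.

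\textbf{Injectivity.} Producing $F$ with $F(S)=S'$ and $F_*$ inner from the tree isomorphism is not justified. Corresponding pieces have conjugate fundamental groups, but you need homeomorphisms between them inducing the prescribed isomorphisms of vertex groups. For spherical summands many isomorphisms of $\pi_1$ are not induced by homeomorphisms (Section~\ref{sec:image-irreducible}), so this needs an argument you do not give. More importantly, even granting $F$, your last step (``their lifts have the same dual-tree edges, so the sphere systems are homotopic'') is the whole content of injectivity, and it is only asserted. The paper proves it directly, with no $F$. For a lifted sphere $s$ and $s'=\phi(s)$, pick corresponding sides $C,C'$. Only finitely many translates $\gamma K$ of a compact fundamental domain lie in $C\Delta C'$, since each such $\gamma$ forces $\gamma[\phi(v),v']$ to cross the edge dual to $s'$. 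Hence $C\Delta C'$ is compact and $[s]=[s']$ in $H_2(\wtil M)\cong\pi_2(\wtil M)$, so the spheres are homotopic. Laudenbach's theorem then upgrades homotopy of the systems to isotopy.
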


\begin{rmk}\label{rmk:sphere-twist}
Theorem \ref{thm:sphere-tree} implies that the action of $\Mod(M)$ on $\mathcal S(M)$ factors through $\Out(\pi_1M)$. Indeed, this follows immediately from the assertion that $\Phi$ is injective and equivariant. 
\end{rmk}

Given Theorem \ref{thm:sphere-tree}, the proof of Theorem \ref{thm:mod-sphere-decomposition} proceeds as in the special case of $M=(S^1\times S^2)^{\#n}$ given in the introduction. 

\begin{proof}[Proof of Theorem \ref{thm:mod-sphere-decomposition}]
Fix a finite subgroup $G<\Mod(M)$, and let $G'$ denote the image of $G$ in $\Out(\Gamma)$, where $\Gamma=\pi_1(M)$. By Hensel--Kielak (Theorem \ref{thm:hensel-kielak}) the action of $G'<\Out(\Gamma)$ on $\mathcal X(\Gamma)$ has a fixed point $T\in\mathcal X(\Gamma)$. By Theorem \ref{thm:sphere-tree} (see also Remark \ref{rmk:sphere-twist}), this implies that the action of $G$ on $\mathcal S(M)$ has a fixed point, as desired. 
\end{proof}

We now turn to the proof of Theorem \ref{thm:sphere-tree}. 
Defining an inverse directly, as in the proof of Proposition \ref{prop:hatcher}, is more subtle because the homomorphism $\Homeo(M)\to\Out(\pi_1M)$ is not generally surjective. Instead we will show directly that the map is injective and surjective. 

For the proof, we will use the same notation as the preceding sections. We write \[M=M_1\#\cdots\#M_m\#(S^1\times S^2)^{\#n}\>\>\>\text{ and }\>\>\>\Gamma=\Gamma_1*\cdots*\Gamma_m*F_n\] 
 for ``the" prime and Grushko decompositions of $M$ and $\Gamma=\pi_1(M)$, respectively (understanding, of course, that these decompositions are not unique in a strict sense).

\begin{proof}[Proof of Theorem \ref{thm:sphere-tree}]
The equivariance of $\Phi$ is obvious. Our main task is to show that $\Phi$ is a bijection. If $M$ is irreducible, then bijectivity is obvious because $\mathcal{S}(M)$ and $\mathcal X(\pi_1M)$ are both singletons ($\mathcal S(M)$ contains only the empty sphere system, and $\mathcal X(\pi_1M)$ contains only the tree with a single vertex). In the remainder of the proof we focus on the case when $M$ is reducible. 

\paragraph{\boldmath Injectivity of $\Phi$.} 

Fix $S,S'\in\mathcal S(M)$. Denote by $T,T'$ the trees dual to the spheres $\wtil S,\wtil S'$ in the universal cover $\wtil M$. Suppose that there is a $\Gamma$-equivariant isomorphism $\phi:T\to T'$. Since edges of $T,T'$ are dual to elements of $\wtil S,\wtil S'$, respectively, $\phi$ defines a bijection $\phi:\wtil S\to \wtil S'$. 

It suffices to show that $s$ and $\phi(s)$ are homologous for each component $s$ of $\wtil S$. Indeed, if $s,\phi(s)$ are homologous, then they are homotopic by Hurewicz's theorem $\pi_2(\wtil M)\cong H_2(\wtil M)$. Applying this to each component, we conclude that  the sphere systems $S,S'$ in $M$ are homotopic, hence isotopic by a theorem of Laudenbach \cite{laudenbach}; see also \cite[Thm.\ 2.2]{hatcher-wahl}. 

Fix a component $s$ of $\wtil S$, and set $s':=\phi(s)$, which is a component of $\wtil S'$. Without loss of generality, we assume $s,s'$ are in general position. To show $s,s'$ are homologous, let $C$ be the closure of one of the two components of $\wtil M\setminus s$ and let $C'$ be the closure of the corresponding component of $\wtil M\setminus s'$. Here corresponding means with respect to the isomorphism $\phi:T\to T'$, noting that $s$ is dual to an edge of $T$, and choosing $C$ is the same as choosing an endpoint of this edge. To show $s,s'$ are homologous it suffices to show that the symmetric difference $C\Delta C'$ is compact, for then we can use $C\Delta C'$ to build a 3-chain whose boundary is $[s]-[s']$. 

We show $C\Delta C'$ is compact. Fix a basepoint $x\in \wtil M\setminus (\wtil S\cup\wtil S')$. Since $M$ is compact, we can choose a compact fundamental domain $K\ni x$ for the action of $\Gamma$ on $\wtil M$. Since finitely many $\Gamma$ translates of $K$ intersect the compact set $s\cup s'$, to show $C\Delta C'$ is compact, it is enough to show that there are finitely many  $\gamma\in \Gamma$ so that $\gamma K$ is contained in the interior of $C\Delta C'$. We can translate this to a statement about the trees $T,T'$. Let $\bar C\subset T$ and $\bar C'\subset T'$ be the subtrees corresponding to $C,C'$, respectively. Let $v$ be the vertex of $T$ that's dual to the component of $M\setminus\wtil S$ containing the basepoint $x$, and define $v'\in T'$ similarly. Now it suffices to show that there are finitely many $\gamma\in\Gamma$ such that either (i) $\gamma v\in\bar C$ and $\gamma v'\notin\bar C'$ or (ii) $\gamma v\notin\bar C$ and $\gamma v'\in\bar C'$. Observe that if $\gamma v\in \bar C$ and $\gamma v'\notin\bar C'$, then $\phi(\gamma v)=\gamma\phi(v)\in\bar C'$ and $\gamma v'\notin\bar C'$, so the path $\gamma[\phi(v),v']$ passes through the edge of $T'$ dual to $s'$. Since the path $[\phi(v),v']$ is compact, only finitely many $\gamma$ translates have this property. This completes the proof that $C\Delta C'$ is compact.

\paragraph{\boldmath Surjectivity of $\Phi$.} Let $T$ be a Grushko tree for $\Gamma$. We want to find a sphere system $S\subset M$ whose dual Grushko tree is $T$.  It will be convenient to denote the action of $\Gamma$ on $T$ by $\rho:\Gamma\to\Aut(T)$. 

{\it Step 1.} First we construct a (essential, filling) sphere system $S'\subset M$ whose dual Grushko tree $T'$ differs from $T$ by the action of $\alpha\in\Aut(\Gamma)$. Concretely, $T'\cong T$ (equivariantly isomorphic) and if $\rho':\Gamma\to\Aut(T)$ is the corresponding action, then $\rho=\rho'\circ\alpha$. 

The quotient $X=T/\Gamma$ has the structure of a graph of groups $(X,\mathcal G)$. We use this to build a manifold $M_X$ homeomorphic to $M$ as follows. Each vertex group $\mathcal G_v$ is either trivial or isomorphic to one of the subgroups $\Gamma_i$. Let $V_{t}$ (resp.\ $V_{nt}$) be the vertices of $X$ with trivial (resp.\ nontrivial) vertex group. Choose an enumeration $V_{nt}=\{v_1,\ldots,v_m\}$ so that $\mathcal G_{v_i}\cong\pi_1(M_i)$. To form $M_X$, for each $v_i\in V_{nt}$ take a copy of $M_i$ and remove the interior of a collection of $\deg(v_i)$ disjoint 3-balls, and for each $v\in V_t$ do the same but with a copy of $S^3$; glue these compact manifolds along their 2-sphere boundaries according to the edge relations in $X$; the result is $M_X$. The 2-spheres where the gluing occurs define an essential, filling sphere system $S_X\subset M_X$. 

Choose a homeomorphism $M_X\cong M$ and let $S'\subset M$ be the image of the sphere system $S_X\subset M_X$ in $M$. Set $T':=\Phi(S')$. By construction $T,T'\in\mathcal X(\Gamma)$ give the same \emph{unmarked} graph of groups, so $T,T'$ are in the same $\Aut(\Gamma)$ orbit (Lemma \ref{lem:orbits}). More precisely, 
there is a $\Gamma$-equivariant isomorphism $T\cong T'$ and $\alpha\in\Aut(\Gamma)$ so that $\rho=\rho'\circ\alpha$, where $\rho,\rho':\Gamma\to\Aut(T)$ are the two given $\Gamma$-actions on $T$.

{\it Step 2.} Now we show $(T,\rho)$ is in the image of $\Phi$ given our knowledge that $(T,\rho')$ is in the image and $\rho=\rho'\circ\alpha$.  Recall from Section \ref{sec:image} that $\Aut(\Gamma)$ can be expressed as a product 
\[\Aut(\Gamma)=W\left[\prod\Aut(\Gamma_i)\rtimes P_\Gamma\right],\] and the subgroup $W$ is contained in the image of $\Homeo(M,*)\to\Aut(\Gamma)$. 
Thus we may write $\alpha=w\beta\sigma$, where $w\in W$ and $\beta\in \prod_{i=1}^m\Aut(\Gamma_i)$ and $\sigma\in P_\Gamma$.

Since $w$ is induced by a homeomorphism $f$, then setting $S''=f(S')$, we have 
\[\Phi(S'')=\Phi(f(S'))=(T,\rho'\circ f_*)=(T,\rho'\circ w).\] 
Therefore, it remains to consider the case $\alpha=\beta\sigma$ belongs to the subgroup generated by factor automorphisms and permutations.

If $\beta\in\prod_{i=1}^m\Aut(\Gamma_i)$ is a factor automorphism, then we argue that $(T,\rho')$ and $(T,\rho'\circ\beta)$ are equivariantly isomorphic, or equivalently, there exists $\phi\in\Aut(T)$ so that $\rho'\circ\beta=\phi\> \rho'\> \phi^{-1}$. To define $\phi$, first choose a fundamental domain for $\rho'(\Gamma)\curvearrowright T$, and use this to label vertices by cosets of vertex stabilizers in the fundamental domain. Then define $\phi$ on vertices by $\phi(v_{x\Lambda})=v_{\beta(x)\beta(\Lambda)}$. One checks that this preserves the edge relations, for example if $(x\Gamma_i,x\Gamma_j)$ is an edge, then $\phi$ sends this to $(\beta(x)\Gamma_i,\beta(x)\Gamma_j)$, which is also an edge. Another type of edge is $(x\Gamma_i,xt\Gamma_j)$, where $t$ is a stable letter (corresponding to the $F_n$ factor in $\Gamma$), and $\phi$ sends this to $(\beta(x)\Gamma_i,\beta(x)t\Gamma_j)$, which is again an edge. Thus $\phi$ defines an automorphism of $T$, and by construction $\rho'\circ\beta(\gamma)=\phi\circ\rho(\gamma)\circ\phi^{-1}$ for $\gamma\in\Gamma$ (check this on vertices of $T$).

It remains to consider the case $\alpha=\sigma\in P_\Gamma$ is a permutation automorphism. First observe that $P_\Gamma$, as a product of symmetric groups, is generated by transpositions, so it suffices to treat the case that $\alpha$ interchanges $\Gamma_j\leftrightarrow\Gamma_k$ for some fixed $j,k$ and is the identity on the other $\Gamma_i$. Our procedure for constructing $S$ in this case is as follows. First we build an auxiliary manifold $\what M$ via the same construction that we used to build $M_X\cong M$ but instead of the labeling $v_i\to M_i$ and $v_j\to M_j$ we use the labeling $v_i\to M_j$ and $v_j\to M_i$. This is illustrated in the following figure, where we focus on a subset of $M$ corresponding to a path in $X$ from $v_i$ to $v_j$ (in a fixed maximal tree of $X$, say).

\begin{figure}[h]
\centering
\begin{overpic}[scale=.9]{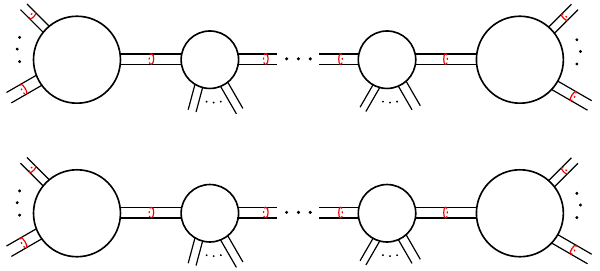}
\put (11,8.5){\scriptsize $M_j$}
\put (84.5,8.5){\scriptsize $M_i$}
\put (84.5,34){\scriptsize $M_j$}
\put (11,34){\scriptsize $M_i$}
\put (-10,5){$\widehat{M}$}
\put (-10,32){$M$}
\put (105,5){\textcolor{red}{$\widehat{S}$}}
\put (105, 32){\textcolor{red}{$S'$}}
\end{overpic}
\caption{The top image depicts the portion of $M$ with its sphere system $S'$  corresponding to the path in $X$ between the vertex $v_i$ (labeled $M_i$) and $v_j$ (labeled $M_j)$. The bottom shows the auxiliary manifold \smash{$\widehat{M}$} where $M_i$ and $M_j$ have been swapped, along with its natural sphere system \smash{$\widehat{S}$}.  }
\label{fig:wrong-label}
\end{figure}
The manifold $\what M$ comes with an obvious sphere system $\what S$, which is also pictured. To obtain $S$, consider the following alternative description of $\what M$ in Figure \ref{fig:auxiliary-plus} and perform a disk-dragging map that interchanges $M_j$ and $M_i$. This defines a homeomorphism \smash{$\what M\to M$}. Let $S$ be the image of \smash{$\what S$} under this homeomorphism.

\begin{figure}[h]
\centering
\begin{overpic}[scale=.9]{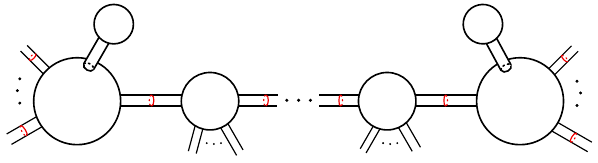}
\put (-10,10) {$\widehat{M}$}

\put (16.75,22.5) {\scriptsize$M_j$}
\put (78.7,22.5) {\scriptsize$M_i$}

\put (11.5,9.5) {\scriptsize$S^3$}
\put (86,9.5) {\scriptsize$S^3$}
\end{overpic}
\caption{The alternate description of the auxiliary manifold $\widehat{M}$, where $M_j$ and $M_i$ are isolated by 2-spheres separating them from a punctured $S^3$. Exchanging the disks bounded by these 2-spheres in each copy of $S^3$  defines a homeomorphism $\widehat{M}\rightarrow M$.  }
\label{fig:auxiliary-plus}
\end{figure}

The sphere system $S$ agrees with $S'$, except along the chosen path connecting $M_i$ to $M_j$. Figure \ref{fig:swap} below demonstrates the difference between $S'$ and $S$ in the case when $v_i$ and $v_j$ are connected by an edge. 

\begin{figure}[h]
\centering
\begin{overpic}[scale=.7]{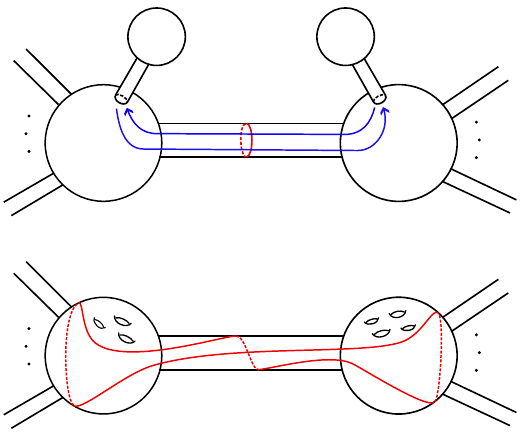}
\put (26.1,24){ \scriptsize$M_i$}
\put (59.3,24){ \scriptsize$M_j$}
\put (25.15,75.5){ \scriptsize$M_j$}
\put (61.3,75.5){ \scriptsize$M_i$}
\end{overpic}
\caption{ Before and after the disk dragging map that exchanges $M_j\leftrightarrow M_i$, when $v_i$ and $v_j$ are adjacent vertices in $X$.}
\label{fig:swap}
\end{figure}

By direct computation, one checks that the Grushko tree dual to $S$ is isomorphic to $(T,\rho'\circ\alpha)$. For this, it helps to fix a maximal tree in $X$ and use this to present $\Gamma=\pi_1(M)$; we choose the path from $v_i$ to $v_j$ inside this maximal tree. In terms of the marking $\Gamma\to\pi_1(X,\mathcal G)$, the effect of the sequence of moves $S'\to\what S\to S$ exactly exchanges the vertices labeled by $\Gamma_i$ and $\Gamma_j$. 

This completes the proof of surjectivity and the proof of Theorem \ref{thm:sphere-tree}. 
\end{proof}

\section{\boldmath Image of $\Homeo(N)\to\Out(\pi_1N)$ for irreducible $N$ } \label{sec:image-irreducible}

Let $N$ be a closed, orientable irreducible 3-manifold.  In this section we compute the image of $\Homeo(N)\to\Out(\pi_1N)$. This will prove Theorem \ref{thm:image}, which complements Proposition \ref{prop:image-mod-out}.

First we explain that $\Homeo(N)\to\Out(\pi_1N)$ is surjective if $N$ is not covered by $S^3$.  In fact, $\Mod(N)\to\Out(\pi_1N)$ is an isomorphism, where our convention here is that $\Mod(N)=\pi_0\Homeo(N)$ (i.e.\ we include orientation-reversing homeomorphisms). This isomorphism follows from several well-known results. If $N$ is Haken, then Waldhausen \cite{waldhausen} showed that $\Mod(N)\to\Out(\pi_1N)$ is an isomorphism. If $N$ is not Haken, then it is either hyperbolic or Seifert fibered (by geometrization). In the hyperbolic case, surjectivity of $\Mod(N)\to\Out(\pi_1N)$ follows from Mostow rigidity, and injectivity holds by a theorem of Gabai--Meyerhoff--Thurston \cite{gabai-meyerhoff-thurston}. If $N$ is non-Haken Seifert fibered and also aspherical, then $\Mod(N)\to\Out(\pi_1N)$ is an isomorphism by McCullough--Soma \cite[Prop.\ 8.3]{mccullough-soma}. 

What remains is the spherical case. In this case, $\Mod(N)\to\Out(\pi_1N)$ is typically \emph{not} surjective. This can be explained using Reidemeister torsion and the $k$-invariant. 

For the remainder of this section, $N=S^3/\Gamma$  is a spherical manifold.  We will use the following facts. 
\begin{itemize}
\item An automorphism $\phi\in\Aut(\Gamma)$ is induced by a homotopy equivalence of $N$ if and only if $\phi$ acts on $H_3(\Gamma)\cong\Z/|\Gamma|\Z$ by multiplication by $\pm1$ \cite[Thm.\ pg.\ 91]{plotnick-k-invariant}. 
\item The homomorphism $\Mod(N)\to\Out(\Gamma)$ is injective, unless $N=S^3$ or $\R P^3$. Indeed, with these exclusions, any diffeomorphism that induces the identity on $\pi_1$ is homotopic to the identity; c.f.\ \cite[Lem.\ 3.3]{mccullough-isometries-elliptic}. This fails for $S^3$ and $\R P^3$ because they have orientation-reversing diffeomorphisms, while every automorphism of $\pi_1(N)$ is trivial. On the other hand, for $S^3$ and $\R P^3$, for orientation-preserving homeomorphisms we have  $\Mod^+(N)\cong\Out(\Gamma)=1$. 
\end{itemize}

\begin{rmk}\label{rmk:k-invariants}
The condition that $\phi\in\Aut(\Gamma)$ acts on $H_3(\Gamma)$ by $\pm1$ can be interpreted in terms of $k$-invariants. 
A group $\Gamma$ acting freely on $S^{2k-1}$ has periodic cohomology of period $2k$; c.f.\ \cite[Ch.\ VI.9]{brown}. In particular $H_{2k-1}(\Gamma)\cong H^{2k}(\Gamma)\cong\Z/|\Gamma|\Z$. This group is generated by the Euler class $e\in H^{2k}(\Gamma)$ of the fibration $S^{2k-1}\to \frac{S^{2k-1}\times E\Gamma}{\Gamma}\to B\Gamma$. Indeed, the total space is homotopy equivalent to $N:=S^{2k-1}/\Gamma$ since $\Gamma$ acts freely on $S^{2k-1}$, so in the spectral sequence of this fibration, the transgression $H^{2k-1}(S^{2k-1})\to H^{2k}(B\Gamma)$ is surjective because $H^{2k}(N)=0$. This Euler class also coincides with the $(2k-1)$-st $k$-invariant of $N$. In the Postnikov tower $X_n$ of $N$, the space $X_{2k-1}$ is a fibration $K(\Z,2k-1)\to X_{2k-1} \to K(\Gamma,1)$, and the associated $k$-invariant $k\in H^{2k}(B\Gamma;\Z)$ is the Euler class of this fibration. By the Postnikov construction, there is a map $N\to X_3$ that extends to a map between the fibrations $S^{2k-1}\to N\to B\Gamma$ and $K(\Z,2k-1)\to X_{2k-1} \to K(\Gamma,1)$. By naturality of the Euler class, $k=e$. Thus $\phi$ acts by $\pm1$ on $H_3(\Gamma)$ if and only if it preserves the $k$-invariant of $N$ (up to sign, since the $k$-invariant is only well-defined up to a sign).
\end{rmk}

\subsection{Lens spaces}

Fix $(m,q)$ relatively prime. Recall that $L(m,q)$ denotes the quotient of $S^3\subset\C^2$ by the $\Z/m\Z$ action generated by the homeomorphism $(z_0,z_1)\mapsto(\zeta z_0,\zeta^q z_1)$, where $\zeta=e^{2\pi i/m}$. 
For example, $L(1,q)\cong S^3$ and $L(2,1)\cong\R P^3$. 

\begin{prop}\label{prop:lens-mod} Fix $(m,q)$ relatively prime with $m\ge1$.
\begin{enumerate}
\item[(a)] If $m\in\{1,2\}$, then $\Mod(L(m,q))\cong\Z/2\Z$ acts trivially on $\pi_1(L(m,q))\cong\Z/m\Z$.
\end{enumerate}
For the remaining cases, assume $m\ge3$.
\begin{enumerate}
\item[(b)] If either $q=\pm1\pmod m$ or $q^2\neq\pm1\pmod m$, then $\Mod(L(m,q))\cong\Z/2\Z$ acts on $\pi_1(L(m,q))\cong\Z/m\Z$ by multiplication by $\pm1$. 
\end{enumerate}
For the remaining cases, assume $q\neq\pm1\pmod m$. 
\begin{enumerate}
\item[(c)] If $q^2=1\pmod m$ then $\Mod(L(m,q))\cong \Z/2\Z\times\Z/2\Z$ acts on $\pi_1(L(m,q))$ by multiplication by $\{\pm 1,\pm q\}\cong\Z/2\Z\times\Z/2\Z$. 
\item[(d)] If $q^2=-1\pmod m$ then $\Mod(L(m,q))\cong \Z/4\Z$ acts on $\pi_1(L(m,q))$ by multiplication by $\{\pm 1,\pm q\}\cong\Z/4\Z$. 
\end{enumerate}
\end{prop}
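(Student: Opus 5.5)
The plan is to combine the classification of lens spaces up to homeomorphism (Reidemeister, Brody, Bonahon) with the known computation of the mapping class group of a lens space (Bonahon). The action on $\pi_1$ is then read off from explicit isometries.

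\textbf{Step 1: lower bound from explicit isometries.} Write $L=L(m,q)$ and let $g$ generate the deck group, $g(z_0,z_1)=(\zeta z_0,\zeta^q z_1)$. Every isometry of $S^3$ normalizing $\langle g\rangle$ descends to $L$. I would list four:
\begin{itemize}
\item $c(z_0,z_1)=(\bar z_0,\bar z_1)$ conjugates $g$ to $g^{-1}$, so it acts by $-1$ on $\pi_1$. It is orientation preserving, since it is a product of two reflections of $\C\cong\R^2$.
\item $\tau(z_0,z_1)=(\bar z_0,z_1)$ commutes with $g$ only when $m\le2$. In that case it gives an orientation-reversing homeomorphism acting trivially on $\pi_1$. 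This yields (a), once we know $\Mod(S^3)\cong\Mod(\R P^3)\cong\Z/2$ (Cerf, Bonahon).
\item If $q^2\equiv1$, the swap $s(z_0,z_1)=(z_1,z_0)$ conjugates $g$ to $(\zeta^q z_0,\zeta z_1)=g^{q}$. So it acts by $q$ on $\pi_1$, and it is orientation reversing.
\item If $q^2\equiv-1$, the map $(z_0,z_1)\mapsto(z_1,\bar z_0)$ conjugates $g$ to $g^{q}$ up to the correct identification, so it acts by $\pm q$. Its square is $(z_0,z_1)\mapsto(\bar z_0,\bar z_1)=c$, so it has order $4$ in the image.
\end{itemize}
These maps show that the image of $\Mod(L)\to\Out(\Z/m)=(\Z/m)^\times$ contains the subgroups claimed in (b)--(d).

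\textbf{Step 2: upper bound.} A homeomorphism $f$ acting by multiplication by $a$ on $\pi_1$ is a self-homotopy equivalence. By the classical computation of the degree of maps between lens spaces, $\deg f\equiv \pm a^2\pmod m$, since the linking form $q^{-1}/m$ must be preserved up to sign. So $a^2\equiv\pm1$. Reidemeister torsion, or directly the Brody--Reidemeister classification $L(m,q)\cong L(m,q')$ iff $q'\equiv\pm q^{\pm1}$, forces $a\in\{\pm1,\pm q\}$.

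Concretely, I would use condition (ii) of Theorem~\ref{thm:image} in its torsion form. The torsion of $L$ is $(t^{a}-1)(t^{ar}-1)$ with $qr\equiv1$, up to units. Invariance under $t\mapsto t^a$ gives $\{a,ar\}=\pm\{1,r\}$, hence $a\in\{\pm1,\pm r\}=\{\pm1,\pm q\}$ when $q^2\equiv\pm1$, and $a=\pm1$ otherwise. (Only the classification of lens spaces is really needed here, so this step does not rely on Theorem~\ref{thm:image} itself.)

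\textbf{Step 3: injectivity.} For $m\ge3$ the map $\Mod(L)\to\Out(\pi_1)$ is injective by the fact cited above from McCullough, which rests on Bonahon's theorem that $\Mod(L)$ is realized by isometries. So $\Mod(L)$ is isomorphic to its image, and that image is $\{\pm1\}$, $\{\pm1,\pm q\}\cong\Z/2\times\Z/2$, or $\{\pm1,\pm q\}\cong\Z/4$. Which case occurs is determined by whether $q$ squares to $1$ or $-1$; both cannot happen when $m\ge3$. In case (b) with $q\equiv\pm1$, the group $\{\pm1,\pm q\}$ collapses to $\{\pm1\}$, giving $\Z/2$.

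\textbf{Main obstacle.} The hard step is the upper bound together with injectivity, that is, showing no exotic homeomorphisms exist. I would cite Bonahon's theorem (genus-one Heegaard splittings of lens spaces are unique up to isotopy, and $\Mod(L(m,q))$ is as listed) rather than reprove it. The remaining work is the careful sign and orientation bookkeeping in Step 1.
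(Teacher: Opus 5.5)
Your proposal is correct, and it differs from the paper's proof in one step: how the upper bound is shown to be sharp. The upper bound is the same in both. Invariance of the Reidemeister torsion under $\zeta\mapsto\zeta^a$, together with Franz's lemma, forces $a\in\{\pm1,\pm q\}$, and $a=\pm1$ when $q^2\not\equiv\pm1$. Both arguments also use injectivity of $\Mod(L)\to\Out(\pi_1L)$ for $m\ge3$. The paper then exhibits no homeomorphisms at all. It cites McCullough's table for the isomorphism type of $\Mod(L(m,q))$, notes that the upper-bound subgroup is isomorphic to that group, and lets injectivity force equality. You instead get the lower bound from the explicit isometries $c$, $s$, $h$ normalizing $\langle g\rangle$, and deduce the structure of $\Mod(L)$ from injectivity alone. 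Your route therefore avoids the table (though not injectivity, which is the deep input either way) and gives isometric representatives of every mapping class. The paper's route is shorter once the table is granted. For (a), both arguments simply cite $\Mod(S^3)\cong\Mod(\R P^3)\cong\Z/2$.

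There are two slips to correct.

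\begin{itemize}
\item The swap $s(z_0,z_1)=(z_1,z_0)$ is orientation \emph{preserving}, not reversing. It is complex linear with complex determinant $-1$, so its real determinant is $|{-1}|^2=1$. In fact, for $q^2\equiv1$ and $m\ge3$, $L(m,q)$ has no orientation-reversing self-homeomorphism, since one would require $q\equiv-q^{\pm1}$. This does not affect the proposition, which concerns the full $\Mod$ and only its action on $\pi_1$.
\item The torsion condition must be the equality of four-element sets $\{\pm a,\pm ar\}=\{\pm1,\pm r\}$, as in the paper. It is not $\{a,ar\}=\pm\{1,r\}$ with a common sign. In case (d), $r\equiv-q$, so $a=q$ gives $\{a,ar\}=\{q,1\}$, which is neither $\{1,-q\}$ nor $\{-1,q\}$. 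Read literally, your condition would exclude the class of your own map $h$. Also, the torsion is $(t-1)(t^r-1)$; the exponents involving $a$ appear only after substituting $t\mapsto t^a$.
\end{itemize}

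Finally, your parenthetical remark is slightly off. The bare classification $L(m,q)\cong L(m,q')$ if and only if $q'\equiv\pm q^{\pm1}$ does not by itself constrain $a$. What constrains $a$ is the torsion computation underlying that classification, which is what you actually use. With these corrections, your conclusion goes through.
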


\begin{proof}
The mapping class computations are contained in McCullough \cite[Table 3]{mccullough-isometries-elliptic}. Given the mapping class group computation, (a) is obvious since the fundamental group is either trivial or $\Z/2\Z$ (and both have trivial automorphism group). 

In the remaining cases (b)--(d), the homomorphism $\Mod(L(m,q))\to\Out(\Z/m\Z)$ is injective. To determine the image, we use the Reidemeister torsion, whose value is
\[\tau(L(m,q))=(\zeta-1)(\zeta^r-1),\]
where $rq=1\pmod m$ and this expression is only well defined modulo multiplication by an element of $\{\pm\zeta^k\}$ (see e.g.\ \cite{ranicki}). Recall that the torsion can be used to show that if $L(m,q)$ is homeomorphic to $L(m,q')$, then $q'=\pm q^{-1}\pmod m$. The same analysis also shows that if $a\in(\Z/m\Z)^\times$, then the automorphism of $\Z/m\Z$ given by multiplication by $a$ is induced by a homeomorphism of $L(m,q)$ only if the subsets $\{a,-a,ar,-ar\}$ and $\{1,-1,r,-r\}$ of $\Z/m\Z$ agree. To see this, first note that the torsion is a homeomorphism invariant, so if multiplication by $a$ (which we write as $\zeta\mapsto\zeta^a$) is induced by a homeomorphism, then we must have an equality
\[(\zeta-1)(\zeta^r-1)=\pm\zeta^k(\zeta^a-1)(\zeta^{ar}-1)\]
for some $k$. Multiplying each side by its conjugate gives an equation
\[(\zeta-1)(\zeta^{-1}-1)(\zeta^r-1)(\zeta^{-r}-1)=(\zeta^a-1)(\zeta^{-a}-1)(\zeta^{ar}-1)(\zeta^{-ar}-1),\]
and a lemma of Franz implies that the sets of exponents must agree. Compare with \cite[pg. 98]{cohen}. 

In each case (b)--(d), we compute that the possible values for $a$ are either $\{\pm1\}$ or $\{\pm1,\pm q\}$, as in the statement of the proposition. This only gives an upper bound on the image of $\Mod(L(m,q))\to\Out(\Z/m\Z)$, but in each case this upper bound is a group isomorphic to $\Mod(L(m,q))$, and since $\Mod$ injects into $\Out$, this allows us to conclude. 
\end{proof}

\subsection{Non-lens-space spherical manifolds}

We divide into seven cases for $\Gamma$, following  McCullough \cite{mccullough-isometries-elliptic}, where the mapping class groups of these manifolds are computed. 
\begin{enumerate}[I.]
\item $Q_8$ or $Q_8\times C_n$ with $(2,n)=1$
\item $D_{4m}^*$ or $D_{4m}^*\times C_n$ with $m>2$ and $(2m,n)=1$
\item $T_{24}^*$ or $T_{24}^*\times C_n$ with $(6,n)=1$
\item $O_{48}^*$ or $O_{48}*\times C_n$ with $(6,n)=1$
\item $I_{120}^*$ or $I_{120}^*\times C_n$ with $(30,n)=1$
\item Index 2 diagonal in $D_{4m}^*\times C_{4n}$, quotient by the diagonal central subgroup, where $(2m,n)=1$ and $(m,2)=1$
\item Index 3 diagonal in $T_{24}^*\times C_{6n}$, quotient by the diagonal central subgroup, where $3\mid n$ and $(n,2)=1$ 
\end{enumerate}

\paragraph{Case I (quaternion).} 

If $\Gamma=Q_8$, then $\Out(\Gamma)\cong S_3$. On the other hand, McCullough computes $\Mod(N)\cong S_3$. Thus $\Mod(N)\to\Out(\Gamma)$ is an isomorphism (since the map is injective).

If $\Gamma=Q_8\times C_n$ with $n>1$ odd, then $\Out(Q_8\times C_n)\cong S_3\times(\Z/n\Z)^\times$. To restrict the image of $\Mod(N)\to\Out(\Gamma)$, observe that if an automorphism $\alpha$ of $\Gamma=Q_8\times C_n$ is induced by a homeomorphism $f$ of $N$, then since it preserves the subgroup $\{1\}\times C_n$, it lifts to an automorphism of $S^3/C_n=L(n,1)$. Thus if $\alpha=f_*$, then $\alpha$ acts on the $C_n$ factor by multiplication by $\pm1$ by Proposition \ref{prop:lens-mod}. On the other hand, McCullough computes $\Mod(N)\cong  S_3\times C_2$, so the preceding constraint determines the image of $\Mod(N)\to\Out(\Gamma)$ exactly. 

\begin{rmk}[Roadmap]
Through this example we see what will be the general pattern of our computation:
\begin{enumerate}
\item First assume $\Gamma$ does not have the cyclic factor. Compute $\Out(\Gamma)$. Use the $k$-invariant and lens space covers to constrain the image of $\Mod(N)\to\Out(\Gamma)$. Compare with McCullough's computation of $\Mod(N)$ and deduce there is only one possible injection $\Mod(N)\to\Out(\Gamma)$ subject to the constraint.
\item When $\Gamma=\Gamma_0\times C$ has a cyclic factor, since the orders of $C$ and $\Gamma_0$ are relatively prime $\Aut(\Gamma)\cong\Aut(\Gamma_0)\times\Aut(C)$. Now use the fact that $N$ has a lens space cover to constrain the image of $\Mod(N)\to\Aut(\Gamma)\to\Aut(C)$. Compare again with McCullough's computation and conclude. 
\end{enumerate} 
\end{rmk}

\paragraph{Case II (binary dihedral).}

The group $\Gamma=D_{4m}^*$ is also known as the binary dihedral group and also as a dicyclic group. In other words, it is both an extension $1\to C_2\to D_{4m}^* \to D_{2m}\to1$ and an extension $1\to C_{2m}\to D_{4m}^*\to C_2\to1$. To compute $\Aut(\Gamma)$, we use the latter perspective. There is a presentation 
\[D_{4m}^*=\langle a,x\mid a^{2m}=1, x^{-1}ax=a^{-1}, x^2=a^m\rangle.\]
Here $a$ generates the kernel of the map $D_{4m}^*\to C_2$. 

We show $\Aut(\Gamma)\cong C_{2m}\rtimes\Aut(C_{2m})$. Since we assume $m>2$, the subgroup $C_{2m}<D_{4m}^*$ is characteristic (it is the unique cyclic subgroup of order $2m$), so there is a homomorphism $\Aut(\Gamma)\to\Aut(C_{2m})$ and this is obviously a split surjection (consider automorphisms of $\Gamma$ of the form $a\mapsto a^k$ and $x\mapsto x$ for $k$ relatively prime to $2m$). To analyze the kernel, note (using the presentation) that elements of $\Gamma$ can be written in a normal form $x^ia^j$ where $i=0,1$ and $j=0,\ldots,2m-1$. Then the kernel consists of automorphisms of the form $a\mapsto a$ and $x\mapsto xa^j$. These form a cyclic subgroup generated by the automorphism $a\mapsto a$ and $x\mapsto xa$. 

We compute $\Out(\Gamma)\cong C_2\times \frac{\Aut(C_{2m})}{\{\pm1\}}$, since, with respect to the isomorphism $\Aut(\Gamma)\cong C_{2m}\rtimes\Aut(C_{2m})$, the inner automorphism group is identified with $C_m\rtimes\{\pm1\}$. Indeed, conjugation by $a$ acts by $a\mapsto a$ and $x\mapsto xa^{-2}$ and conjugation by $x$ acts by $a\mapsto a^{-1}$ and $x\mapsto x$. The factor $C_2<\Out(\Gamma)$ is generated by the automorphism $a\mapsto a$ and $x\mapsto xa$. Projection of $\phi\in\Out(\Gamma)$ to the factor $\Aut(C_{2m})$ is given by the action on $\langle a\rangle=C_{2m}$.

For the topological constraint, if $\phi\in\Aut(\Gamma)$ is induced by a homotopy equivalence, then it must act by $\pm1$ on  $H^4(\Gamma;\Z)\cong\Z/|\Gamma|\Z$. Consequently, the action of $\phi$ on $H^4(C_{2m};\Z)\cong\Z/2m\Z$ (by restriction) must also be by multiplication by $\pm1$. From this and Proposition \ref{prop:lens-mod} we see no element of $\Out(\Gamma)\cong C_2\times \frac{\Aut(C_{2m})}{\{\pm1\}}$ that projects nontrivially to the second factor is in the image of $\Mod(N)$. Thus the image of $\Mod(N)\to\Out(\Gamma)$ is contained in $C_2<\Out(\Gamma)$.  

On the other hand, McCullough proves $\Mod(N)\cong C_2$, so the image of $\Mod(N)\to\Out(\Gamma)$ is the subgroup generated by the automorphism sending $a\mapsto a$ and $x\mapsto xa$. 

If $\Gamma=D_{4m}^*\times C_{n}$, then using the lens space trick we see that the image of $\Mod(N)$ is contained in $C_2\times C_2\subset \Aut(D_{4m}^*)\times\Aut(C_n)$, and McCullough computes $\Mod(N)=C_2\times C_2$, so this computes the image precisely. 

\paragraph{Case III (binary tetrahedral).} For the binary tetrahedral group $\Gamma=T_{24}^*$, there is an extension 
\[1\to C_2\to T_{24}^*\to A_4\to1.\]

One computes that $\Aut(T_{24}^*)\cong\Aut(A_4)$ and $\Out(T_{24}^*)\cong\Out(A_4)\cong\Z/2\Z$. This is well-known. One route to this is to 
make use of the fact that the extension defining $T_{24}^*$ is isomorphic to
\[1\to \{\pm I\}\to\SL_2(\mathbb F_3)\to\PSL_2(\mathbb F_3)\to1.\]

Now McCullough computes $\Mod(N)\cong C_2$, so $\Mod(N)\to\Out(\Gamma)$ is an isomorphism. 

If $\Gamma=T_{24}^*\times C_n$, then we find that the image of $\Mod(N)\to\Out(\Gamma)$ is contained in $C_2\times C_2<\Out(T_{24}^*)\times\Out(C_n)$. Since McCullough computes $\Mod(N)\cong C_2\times C_2$, this gives the image precisely. 

\paragraph{Case IV (binary octahedral).} 

Set $\Gamma=O_{48}^*$, the binary octahedral group. In this case McCullough computes $\Mod(N)=1$, so $\Mod(N)\to\Out(\Gamma)$ is also trivial. 

It is interesting to elaborate on this because $\Out(\Gamma)\cong C_2$ is not trivial; see e.g.\ Plotnick \cite[\S3]{plotnick-k-invariant}. Plotnick also computes that the generator $\phi\in \Out(\Gamma)$ acts on $H_3(\Gamma)\cong\Z/48\Z$ by multiplication by $25$. In particular, this implies that $\phi$ is not induced by a homotopy equivalence, which gives another explanation for why the image of $\Mod(N)\to\Out(\Gamma)$ is trivial (without the need for McCullough's computation). 

As in the preceding analysis, if $\Gamma=O_{48}^*\times C_n$, then the image of $\Mod(N)\to\Out(\Gamma)$ is contained in $\{1\}\times C_2<\Out(O_{48}^*)\times\Out(C_n)$. Since McCullough computes $\Mod(N)\cong C_2$, this gives the image precisely. 

\paragraph{Case V (binary icosahedral).} Set $\Gamma=I_{120}^*$, the binary icosahedral group. Here McCullough computes $\Mod(N)=1$ so again $\Mod(N)\to\Out(\Gamma)$ is trivial. This case is also discussed by Plotnick. Here $\Out(\Gamma)\cong C_2$ and a generator acts on $H_3(\Gamma)\cong\Z/120\Z$ by multiplication by 49, and thus is not induced by any homotopy equivalence. 

For $\Gamma=I_{120}^*\times C_n$, the image of $\Mod(N)\to\Out(\Gamma)$ is contained in $\{1\}\times C_2<\Out(I_{120}^*)\times\Out(C_n)$. Since McCullough computes $\Mod(N)\cong C_2$, this gives the image precisely. 

\paragraph{Case VI (index-2 diagonal).} 

Fix $m$ odd and $n=2^k$ for $k\ge1$. Define $\Gamma^*<C_{4n}\times D_{4m}^*$ as the pullback 
\[\begin{xy}
(-20,0)*+{\Gamma^*}="A";
(0,0)*+{D_{4m}^*}="B";
(-20,-15)*+{C_{4n}}="C";
(0,-15)*+{C_2}="D";
{\ar"A";"B"}?*!/_3mm/{};
{\ar@{->>}"C";"D"}?*!/^3mm/{\pi_1};
{\ar "A";"C"}?*!/_3mm/{};
{\ar@{->>} "B";"D"}?*!/_3mm/{\pi_2};
\end{xy}\]
with respect to maps $\pi_1,\pi_2$ as in the diagram. These maps are unique (the abelianization of $D_{4m}^*$ is $C_4$ when $m$ is odd). The group $\Gamma$ is defined as the quotient of $\Gamma^*$ by the diagonal central subgroup $C_2<\Gamma^*<C_{4n}\times D_{4m}^*$. (Here $\Gamma^*$ is naturally a subgroup of $S^3\times S^3$ and $\Gamma$ is its image in $\SO(4)$.) 

\begin{lem}\label{lem:index-2-iso-type}
The group $\Gamma$ defined above is isomorphic to $C_{m}\rtimes C_{4n}$, where the action $C_{4n}\to\Aut(C_m)=(\Z/m\Z)^\times$ has image $\{\pm1\}$. 
\end{lem}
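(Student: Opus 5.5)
The plan is to write down explicit generators for $\Gamma$, identify a normal cyclic subgroup of order $m$ with a cyclic complement of order $4n$, and read off the action.

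\textbf{Step 1: elements of $\Gamma^*$.} Use the presentation $D_{4m}^*=\langle a,x\mid a^{2m}=1,\ x^{-1}ax=a^{-1},\ x^2=a^m\rangle$. Since $m$ is odd, $a^m$ is the central element of order $2$ and $\langle a\rangle=\langle a^2\rangle\times\langle a^m\rangle\cong C_m\times C_2$. The abelianization $\pi_2:D_{4m}^*\to C_4$ sends $x$ to a generator, and $a=a^2\cdot a^m\cdot a^{-2\cdot\frac{m+1}{2}}\cdots$ to the image of $a^m=x^2$. The problem statement's map to $C_2$ is $\pi_2$ followed by reduction mod $2$, so $\ker\pi_2\supseteq\langle a\rangle$ and $\pi_2(x)\ne 0$. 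Let $t$ be a generator of $C_{4n}$; then $\pi_1(t)$ is the generator of $C_2$. The pullback $\Gamma^*$ therefore has order $4n\cdot 4m/2=8nm$. It contains:
\begin{itemize}
\item $b:=(1,a^2)$, of order $m$;
\item $c:=(1,a^m)$ and $(t^{2},1)$;
\item $y:=(t,x)$.
\end{itemize}
The diagonal central subgroup is generated by $z:=(t^{2n},a^m)$.

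\textbf{Step 2: identify the normal subgroup and complement in $\Gamma$.} Pass to $\Gamma=\Gamma^*/\langle z\rangle$, which has order $4nm$. The image $\bar b$ of $b$ still has order $m$, because $\langle b\rangle\cap\langle z\rangle=1$ (the orders $m$ and $2$ are coprime). The subgroup $\langle b\rangle$ is normal in $\Gamma^*$: it is the unique subgroup of order $m$ inside $\langle a\rangle$, which is characteristic in $D_{4m}^*$. Hence $\langle\bar b\rangle$ is normal in $\Gamma$.

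Next compute the order of $\bar y$. We have $y^{2}=(t^2,a^m)$, so
\[y^{2n}=(t^{2n},a^{mn}).\]
Since $n=2^k$ is even, $a^{mn}=1$, so $y^{2n}=(t^{2n},1)$. This is not in $\langle z\rangle$, because $z=(t^{2n},a^m)$ has nontrivial second coordinate. Then $y^{4n}=1$, and one checks that $y^{j}\in\langle z\rangle$ only when $4n\mid j$, by looking at the first coordinate and then the second. So $\bar y$ has order $4n$.

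It remains to show $\langle\bar y\rangle\cap\langle\bar b\rangle=1$. This follows from coprimality: $4n$ is a power of $2$ and $m$ is odd. Therefore $|\langle\bar b\rangle\langle\bar y\rangle|=4nm=|\Gamma|$, and $\Gamma=\langle\bar b\rangle\rtimes\langle\bar y\rangle\cong C_m\rtimes C_{4n}$.

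\textbf{Step 3: the action.} Conjugating, $y^{-1}by=(1,x^{-1}a^2x)=b^{-1}$. So the generator of $C_{4n}$ acts on $C_m$ by inversion, and the image of $C_{4n}\to(\Z/m\Z)^\times$ is $\{\pm1\}$. (If $m=1$, the group is simply $C_{4n}$ and the statement holds trivially.)

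\textbf{Main obstacle.} The only delicate point is the bookkeeping of orders in Step 2, namely checking that $\bar y$ really has order $4n$ rather than $2n$ after quotienting by $z$. This is exactly where the hypothesis that $n=2^k$ with $k\ge1$ is even enters: it forces $a^{mn}=1$, so that $y^{2n}$ has trivial second coordinate and is not identified with $z$. I would verify this carefully, together with the order count $|\Gamma^*|=8nm$, which follows from surjectivity of both $\pi_1$ and $\pi_2$.
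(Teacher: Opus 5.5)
Your proof is correct and is essentially the paper's argument. The paper uses the same two elements (its $y^2$ and $tx$ are your $(1,a^2)$ and $(t,x)$), the same fact that $n$ even gives $(tx)^{2n}=t^{2n}$, and the same count $|\Gamma|=4nm$. The only difference is packaging: the paper shows these elements generate $\Gamma$ (via $y=(tx)^{2n}(y^2)^{-\ell}$) and satisfy the relations of $C_m\rtimes C_{4n}$, which gives a surjection. You instead verify the internal semidirect product directly, from normality of $\langle\bar b\rangle$, the exact order of $\bar y$, and trivial intersection.
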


Compare with Milnor's list of groups acting freely on $S^3$ \cite[Thm.\ 2]{milnor}. In determining the image of $\Mod(N)\to\Aut(\Gamma)$ it will be helpful to understand how $\Gamma$ embeds in $\SO(4)$. We see this from the isomorphism between $C_n\rtimes C_m$ and $\Gamma$ given in the proof of Lemma \ref{lem:index-2-iso-type}.

\begin{proof}[Proof of Lemma \ref{lem:index-2-iso-type}]
We work with the presentation 
\[C_{4n}\times D_{4m}^*=\langle t\mid t^{4n}=1\rangle\times \langle x,y\mid x^2=y^m=(xy)^2\rangle.\]
In the presentation of $D_{4m}^*$, the element $x^2=y^m$ is central. The relation $(xy)^2=x^2$ implies that $xyx^{-1}=y^{-1}$. This relation combined with the relation $x^2=y^m$ implies $x^4=y^{2m}=1$, since $y^m=x^2=x(x^2)x^{-1}=x(y^m)x^{-1}=y^{-m}$. 
The homomorphism $\pi_2:D_{4m}^*\to C_2$ is defined by $x\mapsto 1$ and $y\mapsto0$. Accordingly, $\Gamma^*=\{t^ix^jy^k: i+j\equiv0\>(2)\}$ is generated by $tx$ and $y$ (note that $t^2=(tx)^2y^m$). The group $\Gamma$ is obtained from $\Gamma^*$ by adding the relation $t^{2n}=y^m$. We find that $\Gamma$ is generated by $a=y^2$ and $b=tx$ (if $m=2\ell+1$, then $y=(tx)^{2n}(y^2)^{-\ell}$), and we have the following relations $a^m=1$, $b^{4n}=1$, and $bab^{-1}=a^{-1}$. This shows there is a surjection $C_m\rtimes C_{4n}\to \Gamma$, but since both groups have order $4nm$, this map is an isomorphism.
\end{proof}

\begin{rmk}
The case $n=1$ is not of interest. In this case $\Gamma$ is generated by $tx,y$ and has relations $(tx)^2=1$, $y^{2m}=1$, and $(tx)y(tx)^{-1}=y^{-1}$. This shows there is a surjective map $D_{4m}\to \Gamma$, which must be injective since $|\Gamma|=4m$. But $D_{4m}$ does not act freely on $S^3$. 
\end{rmk}

\begin{lem}\label{lem:index-2-aut}
Fix $n=2^k$ and $m$ odd, and set $\Gamma=C_m\rtimes C_{4n}$. Then $\Aut(\Gamma)$ is isomorphic to $C_m\rtimes\Aut(C_m)\times \Aut(C_{4n})$, and $\Out(\Gamma)$ is isomorphic to $\frac{(\Z/m\Z)^{\times}}{\{\pm1\}}\times(\Z/4n\Z)^\times$. 
\end{lem}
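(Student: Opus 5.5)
The plan is to compute $\Aut(\Gamma)$ directly from the presentation
\[\Gamma=\langle a,b\mid a^m=1,\ b^{4n}=1,\ bab^{-1}=a^{-1}\rangle\]
from the proof of Lemma \ref{lem:index-2-iso-type}, with $|\Gamma|=4nm$, $m$ odd and $4n$ a power of $2$. I will parametrize every automorphism by a triple of data, check which triples occur, compute the composition law, and then identify the inner automorphisms.

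First, $C_m=\langle a\rangle$ is characteristic. It is a normal Hall subgroup: its order $m$ is odd and coprime to its index $4n$, which is a power of $2$. A normal Hall subgroup is the set of all elements of $\Gamma$ of order dividing $m$, so it is preserved by every automorphism. Hence any $\phi\in\Aut(\Gamma)$ has the form
\[\phi(a)=a^u,\qquad \phi(b)=a^jb^v,\]
for some $u\in(\Z/m\Z)^\times$, $j\in\Z/m\Z$ and $v\in\Z/4n\Z$. Here $v$ must be a unit, since $\phi$ induces an automorphism of $\Gamma/C_m\cong C_{4n}$.

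Conversely, I will check that every triple $(j,u,v)$ with $u,v$ units defines an automorphism.
\begin{itemize}
\item Since $v$ is a unit mod a power of $2$, it is odd. So $b^v$ inverts $a$, and hence $a^jb^v$ conjugates $a^u$ to $a^{-u}$.
\item We have $(a^jb^v)^2=a^ja^{-j}b^{2v}=b^{2v}$, so $a^jb^v$ has order exactly $4n$.
\end{itemize}
Thus the relations are preserved and the assignment defines an endomorphism. Its image contains $a$ and $a^jb^v$, hence $a$ and $b^v$, hence all of $\Gamma$. So it is surjective, and therefore bijective because $\Gamma$ is finite. This identifies $\Aut(\Gamma)$, as a set, with $\Z/m\Z\times(\Z/m\Z)^\times\times(\Z/4n\Z)^\times$.

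Next I check that the group structure splits as claimed. The automorphisms with $v=1$ form a subgroup, and their action on the pair $(a,b)$ is affine: $a\mapsto a^u$, $b\mapsto a^jb$. So this subgroup is isomorphic to $C_m\rtimes\Aut(C_m)$. The automorphisms $\psi_v$ given by $a\mapsto a$, $b\mapsto b^v$ form a copy of $\Aut(C_{4n})$.

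These two subgroups commute. The key identity is that $(a^jb)^v=a^jb^v$ for $v$ odd, which follows from $(a^jb)^2=b^2$ together with the fact that $b^2$ is central. Using it, both composites send $b\mapsto a^jb^v$ and $a\mapsto a^u$. Since the two subgroups intersect trivially and together generate $\Aut(\Gamma)$, we get
\[\Aut(\Gamma)\cong (C_m\rtimes\Aut(C_m))\times\Aut(C_{4n}).\]

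Finally, the inner automorphisms:
\begin{itemize}
\item Conjugation by $a$ fixes $a$ and sends $b\mapsto a^2b$. Since $2$ is invertible mod $m$, these generate the whole $C_m$ factor.
\item Conjugation by $b$ sends $a\mapsto a^{-1}$ and fixes $b$, which is $u=-1$.
\end{itemize}
So $\mathrm{Inn}(\Gamma)=C_m\rtimes\{\pm1\}$, and therefore
\[\Out(\Gamma)\cong\frac{(\Z/m\Z)^\times}{\{\pm1\}}\times(\Z/4n\Z)^\times.\]

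The only step needing care is the commutation check, specifically the identity $(a^jb)^v=a^jb^v$, which relies on $v$ being odd. Everything else is routine bookkeeping.
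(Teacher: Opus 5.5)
Your proof is correct and follows essentially the same route as the paper. Both arguments use that $C_m$ is characteristic and record an automorphism by its restriction $u$ to $C_m$, its induced map $v$ on $\Gamma/C_m\cong C_{4n}$, and a kernel coordinate $b\mapsto a^jb$; both then identify $\mathrm{Inn}(\Gamma)$ via conjugation by $a$ (your $b\mapsto a^2b$ is the paper's $b\mapsto ba^{-2}$) and by $b$ (giving $u=-1$). Your explicit verification that every triple $(j,u,v)$ is an automorphism, together with the composition law, fills in the paper's ``clearly a split surjection'' and shows that the $\Aut(C_{4n})$ factor acts trivially on the $C_m$ kernel, which is consistent with how the paper writes $\Aut(\Gamma)$.
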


\begin{proof}
The subgroup $C_m<\Gamma$ is characteristic, so an automorphism of $\Gamma$ restricts to an automorphism of $C_m$ and induces an automorphism of $C_{4n}$. This defines a homomorphism $\Phi:\Aut(\Gamma)\to\Aut(C_m)\times\Aut(C_{4n})$, which is clearly a split surjection. Writing $C_m\rtimes C_{4n}=\langle a,b\rangle$, the kernel of $\Phi$ is cyclic of order $m$ generated by the automorphism $a\mapsto a$, $b\mapsto ba$. Thus $\Aut(\Gamma)\cong C_m\rtimes[\Aut(C_m)\times\Aut(C_{4n})]$. 

For the outer automorphism group, conjugation by $a$ acts by $a\mapsto a$ and $b\mapsto ba^{-2}$. Since $m$ is odd, this generates the kernel of $\Phi$. Conjugation by $b$ acts by $a\mapsto a^{-1}$ and $b\mapsto b$, which generates the subgroup $\{\pm1\}<(\Z/m\Z)^\times=\Aut(C_m)$. Thus $\Out(\Gamma)\cong \frac{(\Z/m\Z)^{\times}}{\{\pm1\}}\times(\Z/4n\Z)^\times$. 
\end{proof}

We now consider which elements of $\Out(\Gamma)$ are induced by a homeomorphism. Here we make use of the lens space trick. Every $\phi\in\Out(\Gamma)$ is represented by an automorphism that preserves the subgroups $C_m$ and $C_{4n}$ in $\Gamma$. The lens space cover of $N=S^3/\Gamma$ corresponding to $C_m<\Gamma$ is $L(m,1)$ and the lens space cover corresponding to $C_{4n}<\Gamma$ is $L(4n, 2n-1)$. To see this latter claim, recall that the subgroup $C_{4n}<\Gamma<\SO(4)$ lifts to $S^3\times S^3$ where it is generated by $(t,x)\in S^3\times S^3$. Here $t$ has order $4n$ and $x$ has order 4. The action on a unit quaternion $q\in S^3$ is given by $q\mapsto tqx^{-1}$. In coordinates $q=(q_1,q_2)\subset\C^2$ the action is given by 
\[(q_1,q_2)\mapsto (\zeta^{1+n}q_1,\zeta^{1-n}q_2),\]
where $\zeta=e^{2\pi i/4n}$. If $n=2^k\ge2$, then $(1+n)(1-2n)\equiv1-n\pmod{4n}$, so 
\[\zeta^{1-n}=(\zeta^{1+n})^{(1-2n)}.\]
This shows the quotient of $S^3$ by the subgroup $C_{4n}<\Gamma$ is isomorphic to $L(4n,1-2n)\cong L(4n,2n-1)$. Since $q=(2n-1)^2\equiv1\pmod{4n}$, by Proposition \ref{prop:lens-mod}, we see that the image of $\Mod(L(4n,2n-1))\to\Aut(\Z/4n\Z)$ is isomorphic to $C_2\times C_2$ generated by multiplication by $\{\pm1,\pm q\}$. 

From this we deduce that if $\phi\in\Aut(C_m)\times\Aut(C_{4n})$ is induced by a homeomorphism, then the projection of $\phi$ to $\frac{(\Z/m\Z)^\times}{\{\pm1\}}$ is trivial and the projection to $(\Z/4n\Z)^\times$ is contained in the subgroup $\{\pm 1,\pm 2n-1\}$. This gives a $C_2\times C_2$ upper bound on the image of $\Mod(N)\to\Out(\Gamma)$. Since McCullough computes $\Mod(N)\cong C_2\times C_2$, this computes the image precisely. 

We also need to consider the case $\Gamma=\big[C_m\rtimes C_{4n}\big]\times C_\ell$, where $\ell$ is odd and relatively prime to $m$. The analysis in this case is the same. The obstruction comes from the lens space $L(4n\ell, 2n\ell-1)$, which (together with $L(m,1)$ as before) constrains the image to $C_2\times C_2<\Aut(\Z/4n\ell\Z)$, the subgroup given by multiplication by $\pm1$ or $\pm (2n\ell-1)$. 

\paragraph{Case VII (index-3 diagonal).} Our analysis in this case is similar to the preceding example. We explain it briefly. Fix $n=3^k$ with $k\ge1$.  Define $\Gamma^*<C_{2n}\times T_{24}^*$ as the pullback 
\[\begin{xy}
(-20,0)*+{\Gamma^*}="A";
(0,0)*+{T_{24}^*}="B";
(-20,-15)*+{C_{2n}}="C";
(0,-15)*+{C_3}="D";
{\ar"A";"B"}?*!/_3mm/{};
{\ar@{->>}"C";"D"}?*!/^3mm/{\pi_1};
{\ar "A";"C"}?*!/_3mm/{};
{\ar@{->>} "B";"D"}?*!/_3mm/{\pi_2};
\end{xy}\]
with respect to maps $\pi_1,\pi_2$ as in the diagram. Define $\Gamma$ as the quotient of $\Gamma^*$ by the diagonal central subgroup $C_2<\Gamma^*<C_{2n}\times T_{24}^*$. (Aside: there are multiple choices for the maps $\pi_1,\pi_2$, since we can compose with an automorphism of $C_3$, but it turns out that they give isomorphic $\Gamma$, which are moreover conjugate in $\SO(4)$, so $N=S^3/\Gamma$ is independent of the choices.) 

\begin{lem}\label{lem:index-3-iso-type}
Fix $n=3^k$ with $k\ge1$. The group $\Gamma$ defined above is isomorphic to $Q_8\rtimes C_{n}$, where the action $C_n\to\Aut(Q_8)=S_3$ factors through $C_n\onto C_3<S_3$. 
\end{lem}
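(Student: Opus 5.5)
We mimic the proof of Lemma \ref{lem:index-2-iso-type}: write down explicit generators of $\Gamma$, verify the relations of $Q_8\rtimes C_n$, and finish by an order count. Present $T_{24}^*=Q_8\rtimes C_3$. Here $Q_8=\langle i,j\rangle\subset S^3$ is the unique Sylow $2$-subgroup, which is normal. The element $\omega=-\tfrac12(1+i+j+k)$ has order $3$ and conjugation by it cyclically permutes $i\to j\to k$. The surjection $\pi_2:T_{24}^*\to C_3$ is the quotient by $Q_8$, since $T_{24}^*/Q_8\cong C_3$ is the abelianization. Write $C_{2n}=\langle t\rangle$ with $\pi_1(t)$ a generator of $C_3$; after composing with an automorphism of $C_3$ (harmless, by the aside in the text) we may take $\pi_1(t)=\pi_2(\omega)$. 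Then
\[\Gamma^*=\{(t^a,g): \pi_1(t^a)=\pi_2(g)\}\]
has order $2n\cdot 24/3=16n$. The diagonal central subgroup is $\langle(t^n,-1)\rangle$, so $|\Gamma|=8n$, which matches $|Q_8\rtimes C_n|=8n$.

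Next, identify the pieces inside $\Gamma$. The subgroup $\{1\}\times Q_8<\Gamma^*$ lies in $\ker(\Gamma^*\to C_{2n})$ and meets the diagonal $C_2$ trivially. Its image $Q$ in $\Gamma$ is therefore a copy of $Q_8$, and it is normal because $Q_8\lhd T_{24}^*$. Now let $b$ be the image of $(t,\omega)$ and compute its order.
\begin{itemize}
\item Since $n=3^k$ is odd, $(t,\omega)^n=(t^n,\omega^n)$.
\item Since $3\mid n$, $\omega^n=1$, so $(t,\omega)^n=(t^n,1)$.
\item $(t^n,1)$ is not in the diagonal $C_2=\{1,(t^n,-1)\}$, but $(t^n,1)^2=(t^{2n},1)=1$.
\end{itemize}
Hence $b$ has order dividing $2n$, with $b^n\ne 1$ in $\Gamma$. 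Moreover $b^n$ equals the image of $(1,-1)$, because $(t^n,1)$ and $(1,-1)$ differ by the diagonal element. So $b^n$ is the central element $-1\in Q$.

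This is the main subtlety: $\langle b\rangle$ has order $2n$ and meets $Q$ nontrivially, so $b$ itself does not give the complement. Replace it by
\[c=b\cdot(\text{image of }(1,-1)) = \text{image of }(t,-\omega).\]
Then $c^n=b^n(-1)^n=(-1)(-1)=1$, since $n$ is odd and $(1,-1)$ is central. Also $c\notin Q$, because its $C_{2n}$-coordinate is $t\ne 1$ modulo the identification. More strongly, $\langle c\rangle\cap Q=1$, since the first coordinate of $c^a$ is $t^a$ and $t^a\in\{1,t^n\}$ forces $n\mid a$. Counting orders, $|Q\langle c\rangle|=8n=|\Gamma|$, so $\Gamma=Q\rtimes\langle c\rangle\cong Q_8\rtimes C_n$.

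Finally, conjugation by $c$ on $Q$ equals conjugation by $\omega$, because $t$ and $-1$ are central. That automorphism has order $3$ and permutes $i,j,k$. So the action $C_n\to\Aut(Q_8)$ has image $C_3$ and factors through $C_n\onto C_3$. (Note: $\Aut(Q_8)\cong S_4$ and $\Out(Q_8)\cong S_3$; the order-$3$ image is the same either way.)

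The main obstacle is the bookkeeping with the central $C_2$: choosing the correct lift $c$ rather than $b$, and checking that the normalization $\pi_1(t)=\pi_2(\omega)$ is legitimate, which the text's aside about independence of the choice of $\pi_1,\pi_2$ covers.
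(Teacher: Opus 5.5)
Your argument is correct and is essentially the paper's proof: both realize $\Gamma$ inside the pullback, take the image of $\{1\}\times Q_8$ as the normal copy of $Q_8$, and exhibit a cyclic complement of order $n$ via an order count, and your complement $\langle c\rangle=\langle b^{n+1}\rangle$ is the same subgroup as the paper's $\langle (ts^2)^2\rangle$, namely the unique order-$n$ subgroup of $\langle b\rangle\cong C_{2n}$ (you just make explicit the intersection and order checks the paper leaves implicit). As a minor aside, the normalization $\pi_1(t)=\pi_2(\omega)$ needs no appeal to the paper's remark about choices, since you can simply replace $\omega$ by $\omega^{-1}$.
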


\begin{proof}
We work with the presentation 
\[C_{2n}\times T_{24}^*=C_{2n}\times[Q_8\rtimes C_3]=\langle t\mid t^{2n}=1\rangle\times \big[\langle x,y\mid x^2=y^2=(xy)^2\rangle\rtimes \langle s\mid s^3\rangle\big].\]
Here $sxs^{-1}=y$ and $sys^{-1}=xy$. We define $\pi_1$ by $t\mapsto 1$ and $\pi_2$ by $s\mapsto 1$ and $x,y\mapsto0$. 

The group $\Gamma^*=\{t^iqs^j: q\in Q_8, i+j\equiv0\>(3)\}$ is generated by $x,y$ and $ts^2$. The quotient $\Gamma$ can be obtained by adding the relation $(ts^2)^n=x^2$. Accordingly we find $\Gamma\cong Q_8\rtimes C_{n}$, where $C_n$ is generated by the image of $(ts^2)^2=t^2s$.   
\end{proof}

\begin{rmk}
In the case $k=1$ of the lemma, we recover $\Gamma=T_{24}^*$. 
\end{rmk}

\begin{lem}\label{lem:index-3-aut}
Fix $n=3^k$, $k\ge1$ and $m$ odd, and set $\Gamma=Q_8\rtimes C_{n}$. Then $\Aut(\Gamma)$ is $C_{3^{k-1}}\times S_4$, and $\Out(\Gamma)$ is isomorphic to $C_{3^{k-1}}\times C_2\cong C_{2\cdot 3^{k-1}}$. 
\end{lem}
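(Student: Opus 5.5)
The plan is to compute $\Aut(\Gamma)$ by restricting automorphisms to $Q_8$, and then to read off $\Out(\Gamma)$ by locating $\Inn(\Gamma)$. Write $\Gamma=Q_8\rtimes\langle c\rangle$ with $c$ of order $n=3^k$. Let $\theta\in\Aut(Q_8)$ denote conjugation by $c$; by Lemma \ref{lem:index-3-iso-type} it has order $3$, so $c^3$ is central.

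\textbf{Step 1: the restriction map.} Since $Q_8$ is the unique Sylow $2$-subgroup of $\Gamma$, it is characteristic. So every $\varphi\in\Aut(\Gamma)$ restricts to some $\alpha\in\Aut(Q_8)\cong S_4$ and has the form $\varphi(c)=qc^j$ with $q\in Q_8$ and $j\in(\Z/n\Z)^\times$. Conversely, a triple $(\alpha,q,j)$ defines an automorphism exactly when two conditions hold:
\begin{itemize}
\item[(a)] $\mathrm{inn}(q)\circ\theta^j=\alpha\theta\alpha^{-1}$ in $\Aut(Q_8)$;
\item[(b)] $(qc^j)^n=1$.
\end{itemize}
Any such homomorphism is then bijective by counting.

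Now use that $\theta\in A_4\subset S_4$ and that $\Inn(Q_8)=V_4$, the Klein four-subgroup, which is normal with $S_4/V_4\cong S_3$. Then $\alpha\theta\alpha^{-1}\equiv\theta^{\pm1}\pmod{V_4}$, with sign $\epsilon(\alpha)$ given by the sign of the image of $\alpha$ in $S_3$. Hence (a) forces $j\equiv\epsilon(\alpha)\pmod 3$, and then determines $q$ up to $Z(Q_8)=\{\pm1\}$.

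Condition (b) fixes this sign. The point is that $(-qc^j)^n=(-1)^n(qc^j)^n=-(qc^j)^n$ because $n$ is odd, so exactly one of $\pm q$ satisfies (b). To see that one of them does, expand $(qc^j)^3=q\,\theta^j(q)\,\theta^{2j}(q)\,c^{3j}$ and use that $c^3$ is central. This step, checking that the norm-type element $q\theta^j(q)\theta^{2j}(q)$ behaves correctly, is the one routine computation I expect to be the main technical point.

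\textbf{Step 2: counting and splitting.} For each $\alpha$ there are $3^{k-1}$ admissible $j$ and a unique admissible $q$, so $|\Aut(\Gamma)|=24\cdot3^{k-1}$ and restriction $\Aut(\Gamma)\to S_4$ is surjective. Its kernel $K$ consists of the maps $\psi_i$ that are the identity on $Q_8$ and send $c\mapsto c^{1+3i}$. These form the units $\equiv1\pmod 3$, a cyclic group $C_{3^{k-1}}$.

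$K$ is central in $\Aut(\Gamma)$: by Step 1, $\varphi(c^3)=c^{3j}$, and therefore $\psi_i\varphi(c)=qc^jc^{3ij}=\varphi\psi_i(c)$.

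For a complement, take $H=\{\varphi : j=\pm1\}$. It is a subgroup because the $j$-values multiply under composition, and by Step 1 it maps bijectively onto $S_4$. Hence $\Aut(\Gamma)=K\times H\cong C_{3^{k-1}}\times S_4$.

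\textbf{Step 3: inner automorphisms.} Conjugation by elements of $Q_8$ and by $c$ all have $j=1$, so $\Inn(\Gamma)\subset H$. Next, $Z(\Gamma)=\{\pm1\}\times\langle c^3\rangle$ has order $2\cdot3^{k-1}$, so $|\Inn(\Gamma)|=12$. Its image in $S_4$ is generated by $V_4$ and $\theta$, which is $A_4$. Therefore
\[
\Out(\Gamma)\cong C_{3^{k-1}}\times S_4/A_4\cong C_{3^{k-1}}\times C_2\cong C_{2\cdot3^{k-1}}.
\]
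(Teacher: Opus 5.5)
Your proposal is correct and follows essentially the paper's route. Like the paper, you restrict to the characteristic subgroup $Q_8$, identify the kernel with the units $\equiv 1 \pmod 3$ in $(\Z/3^k\Z)^\times$, split off $S_4$ using the induced action on $\Gamma/Q_8$ (your $H$ is exactly the kernel of the paper's retraction $\Aut(\Gamma)\to\Aut(C_n)\to C_{3^{k-1}}$), and locate $\Inn(\Gamma)$ via $|Z(\Gamma)|=2\cdot 3^{k-1}$. The check you deferred closes in one line: conjugation by $qc^j$ restricts to the order-$3$ automorphism $\alpha\theta\alpha^{-1}$ and $c^{3j}$ is central, so $N=q\,\theta^j(q)\,\theta^{2j}(q)=(qc^j)^3c^{-3j}$ centralizes $Q_8$; hence $N=\pm1$ and $(qc^j)^n=N^{3^{k-1}}=N$, which singles out exactly one sign of $q$ and, under (b), forces $N=1$, justifying your identity $\varphi(c^3)=c^{3j}$.
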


\begin{proof}

Fix a generating set $\Gamma=Q_8\rtimes C_n=\langle x,y\rangle\rtimes \langle u\rangle$ with $uxu^{-1}=y$ and $uyu^{-1}=xy$. The subgroup $Q_8<\Gamma$ is characteristic, so there is a homomorphism 
\[\Phi:\Aut(\Gamma)\to\Aut(Q_8)\cong S_4.\]

The map $\Phi$ is surjective, i.e.\ each automorphism of $Q_8$ extends to an automorphism of $\Gamma$. Indeed, one checks that for each $\sigma\in\Aut(Q_8)$, there exists a unique $\epsilon\in\{\pm1\}$ and $q\in Q_8$ such that $x\mapsto\sigma(x)$, $y\mapsto\sigma(y)$, $u\mapsto u^\epsilon q$ defines an automorphism of $\Gamma$. 

The kernel of $\Phi$ consists of automorphisms of the form $x\mapsto x$, $y\mapsto y$, and $u\mapsto u^i$. This is an automorphism if and only if $i\equiv 1\pmod 3$. In other words, $\ker(\Phi)$ is isomorphic to the kernel of the restriction map $\Aut(C_n)\to \Aut(C_3)$; this group is isomorphic to $C_{3^{k-1}}$ (since $(\Z/3^k\Z)^\times$ is cyclic of order $3^{k-1}$). Since an automorphism of $\Gamma$ induces an automorphism of $\Gamma/Q_8=C_n$, we see that $\ker(\Phi)$ is a retract of $\Gamma$, so in fact the extension $1\to C_{3^{k-1}}\to \Aut(\Gamma)\to S_4\to1$ splits, as desired. 

To compute $\Out(\Gamma)$, observe that the center of $\Gamma$ is isomorphic to $C_2\times C_{3^{k-1}}<Q_8\rtimes C_{3^k}$. Thus the inner automorphism group has order $12$. Since $\Aut(\Gamma)$ has order $8\cdot 3^k$, we can conclude from this that $\Out(\Gamma)\cong C_{3^{k-1}}\times C_2$. A generator of $C_2<\Out(\Gamma)$ is represented by the automorphism $x\mapsto y$, $y\mapsto x$, $u\mapsto u^{-1}y$, and if $2\ell\equiv1\pmod{3^k}$, then this automorphism has order 2. 
\end{proof}

Now we examine which elements of $\Out(\Gamma)$ are induced by homeomorphisms of $N=S^3/\Gamma$. For the remainder of this case, we assume $k\ge2$ (when $k=1$, the group $\Gamma$ is isomorphic to $Q_8\rtimes C_3\cong T_{24}^*$, which is covered by Case III above). First we focus on an element $\phi$ in the subgroup $C_{3^{k-1}}<\Out(\Gamma)$. Any such automorphism preserves $C_{3^k}<\Gamma$, so if $\phi$ is induced by a homeomorphism, then it lifts to an automorphism of the lens space $S^3/C_{3^k}\cong L(3^k,2\cdot3^{k-1}-1)$ (by a computation similar to the index-2 diagonal case). For $q=2\cdot3^{k-1}-1$ and $k\ge2$, we have $q^2\neq\pm1\pmod{3^k}$, so by Proposition \ref{prop:lens-mod}, this implies that $\phi=1$ (since elements of $C_{3^{k-1}}$ act on $C_{3^k}$ by multiplication by units congruent to $1\pmod 3$, as observed in the proof of Lemma \ref{lem:index-3-aut}). 

From this constraint and the group theory of $\Out(\Gamma)=C_{3^{k-1}}\times C_2$, we deduce that if $\phi\in\Out(\Gamma)$ is induced by a homeomorphism, then $\phi\in C_2$. On the other hand, McCullough proves that $\Mod(N)\cong C_2$, so the image of $\Mod(N)\to\Out(\Gamma)$ is $C_2$ and is generated by the automorphism $x\mapsto y$, $y\mapsto x$, $u\mapsto u^{-1}y$. 

The case when $n=3^k\ell$ where $\ell$ is relatively prime to $6$ is treated similarly. Again $\Mod(N)\cong C_2$, and if $\phi\in\Out(\Gamma)=\Aut(C_\ell)\times C_{3^{k-1}}\times C_2<\Aut(C_n)\times C_2$ is induced by a homeomorphism, then its action on $C_n$ is by $\pm1$.

\bibliographystyle{amsalpha}
\bibliography{references}

\end{document}